\newtheorem{theoremmain}{Theorem}
\newtheorem{theorem}{Theorem}[section]
\newtheorem{proposition}[theorem]{Proposition}
\newtheorem{lemma}[theorem]{Lemma}
\newtheorem{corollary}[theorem]{Corollary}
\newcommand{\sgn}{\ensuremath{\mathrm{sgn}}}
\newcommand{\lb}{\ensuremath{\llbracket}}
\newcommand{\rb}{\ensuremath{\rrbracket}}
\newcommand{\noopsort}[1]{}
\newcommand\myurl[1]{\url{#1}}
\newcommand\phdize[1]{Ph.D. Thesis, #1}
\title{The 3-adic eigencurve at the boundary of weight space}
\author{David Roe}
\address{David Roe \\ Pacific Institute for the Mathematical Sciences at the University of Calgary}
\email{roed.math@gmail.com}
\date{\today}
\keywords{eigencurve, overconvergent modular forms}
\subjclass[2010]{11F85 (primary), 11F33 (secondary)}
\begin{document}

\begin{abstract}
This paper generalizes work of Buzzard and Kilford \cite{buzzard-kilford:05a} to the case $p=3$, giving an explicit bound for the overconvergence of the quotient $E_\kappa / V(E_\kappa)$ and using this bound to prove that the eigencurve is a union of countably many annuli over the boundary of weight space.
\end{abstract}

\maketitle

\def\CC{\mathbb{C}}
\def\ZZ{\mathbb{Z}}
\def\ZZZ{\mathbb{Z}_3}
\def\FF{\mathbb{F}}
\def\Fp{\mathbb{F}_3}
\def\QQ{\mathbb{Q}}
\def\PP{\mathbb{P}^1}
\def\QQQ{\mathbb{Q}_3}
\def\BT{\mathbf{T}}
\def\EE{\mathbf{E}}
\def\ww{\omega}
\def\Cp{\mathbb{C}_3}
\def\WW{\mathcal{W}}
\def\OO{\mathcal{O}}
\def\OK{\OO_K}
\def\OC{\mathcal{O}_{\mathbb{C}_3}}

\section{Introduction}\label{sec-intro}

This paper grew out of Kevin Buzzard's course A Concrete Introduction to $p$-adic Modular Forms \cite{buzzard:254z}, part of the eigenvarieties semester at Harvard in spring 2006.  It generalizes the results of Buzzard and Kilford \cite{buzzard-kilford:05a} from the case $p = 2$ to $p = 3$.

The eigencurve, first constructed by Coleman and Mazur \cite{coleman-mazur:Eigencurve}, parameterizes eigenvalues of the compact operator $U$ on the space of overconvergent modular forms.  See Emerton's \cite{emerton:thesis} and Smithline's \cite{smithline:thesis} theses for general background on $p$-adic modular forms and the eigencurve.  In this paper, we prove that the $3$-adic eigencurve consists of a countable disjoint union of annuli near the boundary of weight space, and compute the eigenvalues of $U$ on these components of the eigencurve explicitly:

\begin{theoremmain} \label{theorem-main}
If $\kappa$ is a weight corresponding to $w_0 \in \WW$ with $1/3 < |w_0| < 1,$ and if $v = v(w_0)$, then the slopes of $U$ acting on overconvergent modular forms of weight $\kappa$ are the arithmetic progression $0, v, 2v, 3v, 4v, \ldots,$ each appearing with multiplicity 1.
\end{theoremmain}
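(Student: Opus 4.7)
The plan is to follow the strategy of Buzzard--Kilford for $p=2$, adapted to $p=3$ using the overconvergence bound on $E_\kappa/V(E_\kappa)$ proved in the preceding sections of the paper. The idea is to produce an explicit Banach basis of the space of overconvergent modular forms of weight $\kappa$ in which the matrix of $U$ is almost diagonal, and then read the slopes off the Newton polygon of the characteristic power series $\det(1 - TU)$.

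First I would use multiplication by $E_\kappa$ to identify the weight-$\kappa$ overconvergent modular forms on a sufficiently wide neighborhood of the ordinary locus of $X_0(3)$ with a Banach space of rigid analytic functions. Choosing a suitable Hauptmodul $f$ on $X_0(3)$ vanishing at the cusp $\infty$ (for instance a ratio of $\eta$-quotients) and rescaling it by a power of $3$ dictated by the explicit overconvergence bound, I would show that $\{E_\kappa f^n\}_{n \ge 0}$ is an orthonormal basis of the appropriate Banach module. Since $|w_0| > 1/3$, the overconvergence bound from the preceding section is strong enough to make this rescaling well-defined.

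Next I would compute the matrix $M = (m_{ij})$ of $U$ in this basis, starting from $U(g)(q) = \frac{1}{3}\sum_{\zeta^3=1} g(\zeta q^{1/3})$ on $q$-expansions together with the standard relation $U \circ V = \mathrm{id}$. The crucial 3-adic estimate to establish is that $v_3(m_{nn}) = n v$ exactly, while $v_3(m_{mn}) \ge n v$ for all $m$ (with strict inequality whenever $m < n$). Once these estimates are in hand, a standard argument about principal minors forces the Newton polygon of $\det(1 - TU)$ to have vertices $(n, v n(n-1)/2)$; the resulting slopes are $0, v, 2v, 3v, \ldots$, each with multiplicity $1$, which gives the theorem.

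The main obstacle will be the diagonal valuation calculation in the second step. For $p = 3$ the mod-$3$ geometry of $X_0(3)$ and the explicit behavior of $E_\kappa$ modulo high powers of $3$ are both more intricate than in the $p = 2$ setting, and a careful bookkeeping of the contribution of the Eisenstein factor is required. The hypothesis $1/3 < |w_0| < 1$, equivalently $0 < v < 1$, is precisely the threshold at which the overconvergence bound on $E_\kappa/V(E_\kappa)$ is sharp enough to isolate the leading term of $U(E_\kappa f^n)$ from the off-diagonal noise; verifying that the estimates close exactly in this boundary region is where the real work lies, in perfect analogy with the condition $|w_0| > 1/2$ that appears in the $p = 2$ case of Buzzard--Kilford.
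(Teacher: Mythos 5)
Your high-level strategy --- pick a Banach basis built from an Eisenstein multiplier times powers of a rescaled Hauptmodul, estimate the matrix of $U$, and read the slopes off the Newton polygon of $\det(1-TU)$ --- is indeed the strategy of the paper. But the specific shape you assign to the key estimate is wrong, and the step you defer (``verifying that the estimates close exactly is where the real work lies'') is exactly the content the theorem needs. First, the matrix of $U$ is not ``almost diagonal'' in the basis that makes the computation tractable. The paper works not on $X_0(3)$ but on $X_0(9)$, with the parameter $y = (\theta/V(\theta)-1)/6$ and the basis $\{V(E_\kappa)(cy)^n\}$; the point of this choice is Lemma \ref{fund-lemma}: $U(y^j)=0$ unless $3\mid j$, so the matrix $(m_{i,j})$ has two out of every three columns identically zero, and in particular $m_{nn}=0$ for $3\nmid n$. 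Your claimed diagonal estimate $v_3(m_{nn})=nv$ cannot hold. The correct lower bound $v_3(b_\alpha)\ge v\alpha(\alpha-1)/2$ comes instead from a column-valuation argument (Lemma \ref{strip-lemma}): each nonzero column $j=3t$ is divisible by $d^{3t}$ with $d^3=w_0^2$, and among size-$\alpha$ subsets of column indices the valuation is minimized uniquely at $\{0,3,\dots,3\alpha-3\}$. (Also, multiplying by $E_\kappa$ rather than $V(E_\kappa)$ forfeits the identity $U(gV(h))=hU(g)$ that makes the matrix entries computable via the generating function of Lemma \ref{matrix-lemma}.)

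Second, and more seriously, the exactness half --- showing the distinguished $\alpha\times\alpha$ minor is a $3$-adic unit, so that the Newton polygon actually touches the lower bound at every integer --- is not addressed by any mechanism in your proposal. In the paper this occupies Sections \ref{sec-EE-reduction}--\ref{sec-char-U}: one shows that the reduction $\bar g_\kappa$ of $E_\kappa/V(E_\kappa)$ (written as a series in $w_0y$) modulo the maximal ideal is \emph{independent of $\kappa$} throughout the annulus $1/3<|w_0|<1$ (Lemma \ref{g-bar-lemma}, using Corollary \ref{eve-cor}), computes it explicitly by specializing to a conductor-$9$ finite-order character $\kappa_0$ and exploiting a cubic relation satisfied by $E_{\kappa_0}/V(E_{\kappa_0})$ over $\mathbb{F}_3$, and then proves that the resulting mod-$3$ matrix has nonzero determinant by showing certain polynomials $f_0,\dots,f_{\alpha-1}$ span $\mathbb{F}_3[X]/(X^\alpha)$ (Lemma \ref{det-lemma}). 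None of this is routine bookkeeping, and without it you obtain only an upper bound on the slopes, not the theorem. So the proposal is a correct skeleton of the argument but is missing both the structural input (the vanishing $U(y)=U(y^2)=0$ that organizes the matrix) and the entire unit-determinant computation.
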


In Section \ref{sec-prelim}, we introduce notation that we will need, including definitions of the operators $U$ and $V$ and definitions of the modular forms that will play a crucial role in what follows.  In Lemma \ref{fund-lemma}, we prove fundamental relationships between the modular forms just defined.  In the proof of this lemma we used the $q$-expansion principle, GP/PARI \cite{pari} and Sage \cite{sage} in order to obviate the need for a detailed analysis of the poles and zeroes of the various modular forms involved.  The results stated in Lemma \ref{fund-lemma} constitute the part of the paper most likely to fail for other $p$.  Conversely, if such results can be proved for other $p$, most of the rest of the paper would follow.  Section \ref{sec-prelim} concludes with a corollary giving the action of $U$ and $V$ on various power series rings.

Section \ref{sec-fam-T} begins the analysis of families of modular forms.  We analyze $\BT$, a family given by powers of a theta series, in order to gather information about the Eisenstein family.  Using the results of the previous section, we consider various quotients of $\BT, U(\BT), V(\BT)$ and $VU(\BT)$ and prove that these quotients have specific degrees of overconvergence.

We consider the overconvergence of $\EE / V(\EE)$ in Section \ref{sec-fam-EE}, where $\EE$ is the Eisenstein family.  In order to find the degree of overconvergence of $\EE / V(\EE)$, we use a technique suggested by Buzzard that eliminates the need for some of the arguments in \cite{buzzard-kilford:05a}*{\S 4, 5}.  From Coleman and Mazur \cite{coleman-mazur:Eigencurve}, we know that $\EE / V(\EE)$ is at least slightly overconvergent.  We use the fact that $U$ increases overconvergence, together with the explicit overconvergence bounds for the family $\BT$ found in Section \ref{sec-fam-T}, to show that $\EE / V(\EE)$ extends an explicit distance into the supersingular discs.

In Section \ref{sec-EE-reduction} we consider specializations of $\EE / V(\EE)$ to weights $\kappa$ near the boundary of weight space.  If we expand $E_{\kappa} / V(E_{\kappa})$ as a power series in $y$ (a specific parameter on $X_0(9)$ defined in Section \ref{sec-prelim}), then reduce modulo the maximal ideal, the resulting power series over a finite field does not depend on $\kappa$. 

In Section \ref{sec-gen-U} we find a description for the action of $U$ on the $3$-adic Banach space of overconvergent modular forms of weight $\kappa$.  In particular, if $|c|$ is sufficiently close to $1$ then $V(E_{\kappa})(cy)^n$ forms a basis for this Banach space as $n$ ranges over non-negative integers.  We find a generating function that gives us the matrix of $U$ with respect to this basis.

In Section \ref{sec-char-U} we find the valuations of the coefficients of the characteristic power series of $U$.  The coefficients are given by determinants of submatrices of the matrix of $U$.  We use the generating function from Section \ref{sec-gen-U} to find a lower bound on the valuation of the coefficients.  Finally, we use the results of Section \ref{sec-EE-reduction} to prove that this inequality is actually an equality by showing that a certain determinant is a $3$-adic unit.  Knowledge of the valuations of the coefficients then gives us the proof of the main theorem.

Finally, in Section \ref{sec-other} we summarize other work that has been done on the $p=3$ case.

\subsection*{Acknowledgements}
I would like to thank Kevin Buzzard.  His insistence that I work on a project with him in order to get a grade for his class led to this paper, and also allowed me to learn far more from the Eigenvarieties semester at Harvard.  He spent a significant amount of time outside of class helping me understand the material and working with me on the project that eventually became this paper.  

Second, my debt to the paper of Buzzard and Kilford \cite{buzzard-kilford:05a} will be obvious to anyone who has read it.  To a large extent, I follow their structure, their notation and many of their proofs.

\section{Preliminaries}\label{sec-prelim}

Throughout this paper, we will conflate modular forms with their $q$-expansions
in order to make the grammar easier to follow.  Let $\ww$ be a primitive cube
root of unity, and define $K = \QQQ(\ww).$ Set $\OK$ to be the ring of integers of $K$,
let $\pi = \ww - 1$ be a uniformizer for $\OK$, and let $v_3$ be the extension of
the standard valuation on $\QQQ$ to $K$ (ie $v_3(3) = 1$).  Define $u$ such that $3 = u \pi^2$.
Let $\Cp$ be the completion of the algebraic closure of $\QQQ$ and $\OC$ be the
ring of integers of $\Cp$.  On all of these fields, we have the norm $|x| = 3^{-v_3(x)}.$

All rings are commutative with unity, and if $R$ is a ring we define two 
$R$-module homomorphisms $U$ and $V \colon R\lb q\rb  \rightarrow R\lb q\rb $ by:
$$U\left(\sum_{n=0}^\infty r_n q^n \right) = \sum_{n=0}^\infty r_{3n} q^n,$$
and
$$V\left(\sum_{n=0}^\infty r_n q^n \right) = \sum_{n=0}^\infty r_n q^{3n}.$$
One can easily check that $V$ is an $R$-algebra homomorphism.

\begin{lemma}\label{UV-lemma}
For all $g,h \in R\lb q \rb,$ we have $U(gV(h)) = hU(g)$.
\end{lemma}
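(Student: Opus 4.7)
The plan is to verify the identity by a direct computation on $q$-expansions, after reducing to the case where $h$ is a single monomial. Since both sides of the identity $U(gV(h)) = hU(g)$ are $R$-linear and continuous (in the $q$-adic topology) separately in $g$ and in $h$, it suffices to prove the identity when $g = q^n$ and $h = q^m$ for nonnegative integers $n, m$. Alternatively, one can fix an arbitrary $g = \sum a_n q^n$ and reduce only in $h$, which is cleaner because $U$ is not an algebra homomorphism.

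Taking the second approach, I would fix $g = \sum_{n \geq 0} a_n q^n$ and, by $R$-linearity and $q$-adic continuity of $U$, $V$ and multiplication, reduce to the case $h = q^m$. Then $V(h) = q^{3m}$, so $gV(h) = \sum_{n \geq 0} a_n q^{n+3m}$. Applying $U$ picks out the terms with exponent divisible by $3$; writing $n + 3m = 3j$ forces $n = 3(j-m)$ with $j \geq m$, so
\[
U(gV(h)) = \sum_{j \geq m} a_{3(j-m)} q^j = q^m \sum_{\ell \geq 0} a_{3\ell} q^\ell = h \cdot U(g),
\]
as required.

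If one prefers a coefficient-level proof without reducing, one can directly write $gV(h) = \sum_k \bigl(\sum_{n + 3m = k} a_n b_m\bigr) q^k$ where $h = \sum b_m q^m$, then observe that the coefficient of $q^j$ in $U(gV(h))$ is $\sum_{n+3m = 3j} a_n b_m$, and the condition $n + 3m = 3j$ forces $3 \mid n$, so setting $n = 3\ell$ gives $\sum_{\ell + m = j} a_{3\ell} b_m$, which is precisely the coefficient of $q^j$ in $hU(g)$. There is no real obstacle here; the only subtlety is making sure the reduction to monomials is justified, which follows from $q$-adic continuity since the partial sums of $h$ converge to $h$ in $R\lb q\rb$ and $U$, $V$, and multiplication are all continuous for this topology.
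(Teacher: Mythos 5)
Your proof is correct, and it is the same direct $q$-expansion computation that the paper has in mind (the paper simply says the lemma ``follows from a straightforward computation''). The coefficient-level verification in your final paragraph is exactly the intended argument, and the key point --- that $n+3m=3j$ forces $3\mid n$ --- is correctly identified.
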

\begin{proof}
This follows from a straightforward computation.
\end{proof}

\begin{corollary} \label{UV-cor}
If $h \in R\lb q\rb ^\times$, then $V(h)$ is too, and $U(g/V(h)) = U(g)/h.$
\end{corollary}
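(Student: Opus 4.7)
The plan is to observe that since $V$ is an $R$-algebra homomorphism (as noted in the paragraph preceding Lemma \ref{UV-lemma}), it sends units to units: if $h \cdot h^{-1} = 1$ in $R\lb q \rb$, then $V(h) \cdot V(h^{-1}) = V(1) = 1$, so $V(h)$ is a unit with inverse $V(h^{-1})$. This establishes the first assertion and ensures the quotient $g/V(h)$ makes sense.

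For the second assertion, I would simply reduce it to Lemma \ref{UV-lemma}. Set $g' = g/V(h)$, so that $g = g' \cdot V(h)$. Applying Lemma \ref{UV-lemma} to the pair $(g', h)$ gives
$$U(g) = U(g' \cdot V(h)) = h \cdot U(g') = h \cdot U(g/V(h)).$$
Dividing both sides by $h$ (which is a unit by hypothesis) yields the desired identity $U(g/V(h)) = U(g)/h$.

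There is essentially no obstacle here; the corollary is a direct manipulation using the algebra-homomorphism property of $V$ together with the identity already established in Lemma \ref{UV-lemma}.
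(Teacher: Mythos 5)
Your proof is correct and follows essentially the same route as the paper: the paper applies Lemma \ref{UV-lemma} to the pair $(g, h^{-1})$ and uses that $V$ is a ring homomorphism, whereas you apply it to $(g/V(h), h)$ and then divide by the unit $h$ --- a purely cosmetic difference.
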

\begin{proof}
Apply Lemma \ref{UV-lemma} to $g$ and $h^{-1}$ and note that $V$ is a ring homomorphism.
\end{proof}

We now define modular forms that will serve as analogues of those in \cite{buzzard-kilford:05a}*{\S 2} for the $p=3$ case.

For $k \ge 2$ an even integer, define
$$E_k := 1 + \frac{2}{(1-3^{k-1}) \zeta(1-k)} \sum_{n=1}^\infty \Big( \sum_{\substack{0 < d | n \\ 3 \nmid d}} d^{k-1} \Big) q^n,$$
where $\zeta(s)$ is the Riemann zeta function.  Then $E_k$ is a modular form of level
$3$ and weight $k$ obtained, for $k \ge 4$, from the standard level 1 Eisenstein form of
weight $k$ by dropping an Euler factor.  $E_k$ is an eigenform for $U$.

The function
$$\Delta(q) := q \prod_{n=1}^\infty (1-q^n)^{24} = q - 24q^2 + 252q^3 - 1472q^4 + \cdots$$
is a standard level 1 weight 12 modular form.  Set 
$$f = \sqrt{\frac{\Delta(q^3)}{\Delta(q)}} = q + 12q^2 + 90q^3 + 508q^4 + \cdots,$$
a level 3 modular function giving an isomorphism $X_0(3) \rightarrow \mathbb{P}^1$
(this fact follows from the observation that $f = q \prod_{3 \nmid n} (1-q^n)^{-12}$
has a simple zero at the cusp $\infty$ and no other zeroes).

Define
$$\theta := \sum_{(a,b) \in \ZZ^2} q^{a^2+ab+b^2} = 1 + 6q + 6q^3 + 6q^4 + 12q^7 + \cdots,$$
a level 3 weight 1 modular form that will serve many of the same functions that $E_2$ did in \cite{buzzard-kilford:05a}.

\begin{proposition} \label{theta-eigen}
$\theta$ and $\theta^2$ are eigenforms for the $U$ operator.
\end{proposition}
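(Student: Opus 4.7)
The plan is to exploit the arithmetic interpretation $\theta = \sum_{x \in \ZZ[\ww]} q^{N(x)}$, where $N(a+b\ww) = a^2 + ab + b^2$ is the norm form from $K$ to $\QQ$ restricted to the lattice $\ZZ[\ww]$. Under this identification, $U$ acts by summing over lattice vectors $x$ with $3 \mid N(x)$, and the proposition reduces to a divisibility statement at the ramified prime $\pi = \ww - 1$ (which satisfies $N(\pi) = 3$).

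The key auxiliary fact I would establish first is that $N(x) \bmod 3 \in \{0, 1\}$ for every $x \in \ZZ[\ww]$, with $N(x) \equiv 0 \pmod{3}$ if and only if $\pi \mid x$. This follows from the isomorphism $\ZZ[\ww]/(\pi) \cong \Fp$ together with a direct computation: for $x = r + \pi z$ with $r \in \{0, 1, 2\}$, expanding the norm gives $N(x) = r^2 + r \cdot \mathrm{Tr}(\pi z) + 3 N(z)$, and a short calculation in the $\ZZ$-basis $\{1, \ww\}$ shows $\mathrm{Tr}(\pi z) \in 3\ZZ$ (in fact $\mathrm{Tr}(\pi(a+b\ww)) = -3a$).

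With this in hand, for $\theta$ I would rewrite $U(\theta) = \sum_{x\,:\,\pi \mid x} q^{N(x)/3}$ and substitute $x = \pi y$, which gives a bijection $\ZZ[\ww] \to \pi\ZZ[\ww]$. Since $N(\pi y) = N(\pi) N(y) = 3 N(y)$, this collapses to $U(\theta) = \sum_{y \in \ZZ[\ww]} q^{N(y)} = \theta$. For $\theta^2 = \sum_{(x_1, x_2) \in \ZZ[\ww]^2} q^{N(x_1) + N(x_2)}$, the residue statement implies that $3 \mid N(x_1) + N(x_2)$ forces both $N(x_i) \equiv 0 \pmod{3}$ (two residues in $\{0,1\}$ cannot otherwise sum to $0 \bmod 3$), hence $\pi$ must divide both $x_1$ and $x_2$. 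The same substitution applied componentwise yields $U(\theta^2) = \theta^2$, so both forms are eigenvectors of $U$ with eigenvalue $1$.

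The only real content is the residue computation for $N$ modulo $3$ in the second paragraph; every subsequent step is a direct re-indexing of the sum, and I do not anticipate a serious obstacle.
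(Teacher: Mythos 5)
Your argument is correct, but it is a genuinely different proof from the one in the paper. The paper's proof is a soft dimension count: $\mathcal{M}_2(\Gamma_0(3))$ is one-dimensional, the square of any element of $\mathcal{M}_1(\Gamma_1(3))$ lands there, so $\mathcal{M}_1(\Gamma_1(3))$ is at most one-dimensional, and since $U$ preserves these spaces both $\theta$ and $\theta^2$ are automatically eigenforms. Your proof instead works directly with the lattice sum: the residue computation $N(r+\pi z) \equiv r^2 \pmod 3$ (using $\mathrm{Tr}(\pi(a+b\ww)) = -3a$) shows $N(x) \bmod 3 \in \{0,1\}$ with $N(x) \equiv 0$ exactly when $\pi \mid x$, and the re-indexing $x = \pi y$ then gives $U(\theta) = \theta$ and $U(\theta^2) = \theta^2$ outright. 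I checked the details and they hold; the only blemish is that the norm form of $\ZZ[\ww]$ in the basis $\{1,\ww\}$ is $a^2 - ab + b^2$ rather than $a^2+ab+b^2$, but the two forms are equivalent under $b \mapsto -b$, so the theta series is unaffected. What your route buys is that it is elementary and self-contained (no dimension formulas, no appeal to $U$ preserving spaces of classical forms of level $3$), and it produces the eigenvalue explicitly: both forms have $U$-eigenvalue $1$, which is exactly the fact used later in Lemma \ref{fund-lemma}(1) when the paper writes $U(\theta)/\theta - 1 = 0$ (there the paper recovers the eigenvalue from the constant term of the $q$-expansion). What the paper's route buys is brevity and the stronger structural statement that \emph{any} form in these spaces is an eigenform for \emph{any} operator preserving them.
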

\begin{proof}
Since $\mathcal{M}_2(\Gamma_0(3))$ is 1 dimensional \cite{diamond-shurman:ModularForms}*{Thm. 3.5.1}, and the square of any element of $\mathcal{M}_1(\Gamma_1(3))$ lies in $\mathcal{M}_2(\Gamma_0(3))$, $\mathcal{M}_1(\Gamma_1(3))$ is at most one dimensional.  Thus $\theta$ and $\theta^2$ are both eigenforms.
\end{proof}

Finally, define
$$y = \frac{\frac{\theta}{V(\theta)} - 1}{6} = q - 5q^4 + 32q^7 - 198q^{10} + 1214q^{13} - \cdots,$$
a level 9 modular function giving an isomorphism $X_0(9) \rightarrow \mathbb{P}^1$.

We encapsulate the crucial facts about these modular forms in the following lemma.  Using this lemma, we will then be able to proceed in the same fashion as Buzzard and Kilford in \cite{buzzard-kilford:05a}.

\begin{lemma} \label{fund-lemma} $ $\\
\begin{enumerate}
\item  $U(y) = U(y^2) = 0$ and $U(y^3) = \frac{y (1 + 3y + 9y^2)}{(1+6y)^3}.$
\item For $m \in \ZZ_{\ge 0}$ we have $U(y^{3m+1}) = U(y^{3m+2}) = 0$ and $U(y^{3m}) = \left( \frac{y (1 + 3y + 9y^2)}{(1+6y)^3} \right)^m.$
\item $f = \frac{y (1 + 3y + 9y^2)}{(1-3y)^3},$ and $U(f) = 10 \cdot 3^2 f + 4 \cdot 3^7 f^2 + 3^{11} f^3$ and $V(f) = \frac{y^3}{1-27y^3}$.
\end{enumerate}
\end{lemma}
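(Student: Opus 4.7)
The strategy is to establish part (3) first (using computer algebra together with the $q$-expansion principle), then derive parts (1) and (2) as algebraic consequences via Lemma \ref{UV-lemma} and Corollary \ref{UV-cor}. Since $y$ gives an isomorphism $X_0(9)\to\mathbb{P}^1$, every modular function on $X_0(9)$ is a rational function of $y$; both $f$ and $V(f) = f(q^3)$ are modular functions on $X_0(9)$, so the first two identities of (3) are equalities of rational functions in $y$. Clearing denominators turns each into a polynomial identity which, by the $q$-expansion principle, is verified as soon as the $q$-expansions agree to an order exceeding the total degrees involved. Using GP/PARI or Sage, one computes the $q$-expansions of $f$, $V(f)$, and $y$ to sufficient order and checks the identities directly. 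The formula for $U(f)$ is handled identically: $U(f)$ is a modular function on $X_0(3)$, $X_0(3)$ has genus $0$ with $f$ as a coordinate, so $U(f)$ is a rational function of $f$, and the stated polynomial expression is verified by matching $q$-expansions.

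Given part (3), parts (1) and (2) follow by algebraic manipulation. Proposition \ref{theta-eigen} says $\theta$ and $\theta^2$ are eigenforms for $U$; since $U$ preserves the constant term of a $q$-series and both $\theta$ and $\theta^2$ have constant term $1$, the eigenvalues are both $1$, that is, $U(\theta)=\theta$ and $U(\theta^2)=\theta^2$. The defining relation $\theta = (1+6y)V(\theta)$ combined with Lemma \ref{UV-lemma} (taking $g = 1+6y$, $h = \theta$) gives $U(1+6y) = U(\theta)/\theta = 1$, hence $U(y) = 0$. Squaring the relation gives $\theta^2 = (1+6y)^2 V(\theta^2)$, and a second application of Lemma \ref{UV-lemma} yields $U((1+6y)^2) = 1$; expanding and using $U(y) = 0$ forces $U(y^2) = 0$. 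For $U(y^3)$, I rewrite part (3)(b) as $y^3 = V(f)/(1+27V(f)) = V\bigl(f/(1+27f)\bigr)$ using that $V$ is a ring homomorphism, then apply Lemma \ref{UV-lemma} with $g = 1$ to obtain $U(y^3) = f/(1+27f)$. Substituting the formula for $f$ from part (3)(a), the denominator collapses by the identity $(1-3y)^3 + 27y(1+3y+9y^2) = (1+6y)^3$, giving the stated expression.

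Part (2) then follows immediately: writing $y^{3m+r} = y^r\cdot V\bigl((f/(1+27f))^m\bigr)$ and applying Lemma \ref{UV-lemma} yields $U(y^{3m+r}) = (f/(1+27f))^m\,U(y^r)$, which is $0$ for $r\in\{1,2\}$ by the already-computed values and equals $U(y^3)^m$ for $r = 0$.

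The main obstacle is part (3). The computations themselves are routine once one trusts computer algebra, but the substantive work lies in assembling the correct dimension or degree bound that lets the $q$-expansion principle convert a finite check into a proof, and in justifying that $U(f)$ is indeed a polynomial (not merely a rational function) in $f$ of degree $3$, which requires understanding the pole order of $U(f)$ at the cusp $0$ of $X_0(3)$. As the introduction emphasizes, this is the point in the argument most sensitive to the choice of prime, and the reason the cleanup via $q$-expansions with explicit computer input is preferred over a direct analysis of zeros and poles.
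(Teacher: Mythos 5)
Your proof is correct, and while the overall architecture (computer verification of rational-function identities on genus-zero curves, plus formal manipulation of $U$ and $V$) matches the paper's, you organize the dependencies differently in a way worth noting. The paper proves $U(y)=U(y^2)=0$ exactly as you do, but then verifies $U(y^3) = y(1+3y+9y^2)/(1+6y)^3$ by a \emph{separate} comparison of $q$-expansions (both sides being meromorphic on $X_0(9)$ with at most nine poles, so that checking 100 terms suffices); you instead derive it from part (3) via $y^3 = V\bigl(f/(1+27f)\bigr)$, the identity $UV = \mathrm{id}$, and the polynomial identity $(1-3y)^3 + 27y(1+3y+9y^2) = (1+6y)^3$, which eliminates one of the computer checks. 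For part (2), the paper argues that $U(y)=U(y^2)=0$ forces $y = qV(F)$ for some $F$ (an argument that implicitly uses that $\ZZ\lb q\rb$ is a domain to rule out exponents congruent to $2$ modulo $3$ in the support of $y$) and then computes $U(y^n) = U(q^n)F^n$; your route through $y^{3m+r} = y^r V\bigl((f/(1+27f))^m\bigr)$ and Lemma \ref{UV-lemma} reaches the same conclusion without that intermediate structural claim, and is arguably cleaner. Both approaches lean equally on the background facts that $U(f)$ is again a function on $X_0(3)$ and that pole-order bounds convert a finite $q$-expansion check into a proof; your closing remarks correctly identify these, together with the explicit identities of part (3), as the genuinely $p$-dependent points of the argument.
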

\begin{proof}
\begin{enumerate}
\item By Proposition \ref{theta-eigen}, $\theta$ and $\theta^2$ are both eigenforms for $U$.  Putting this together with Corollary \ref{UV-cor} and the definition of $y$, we have that
$$6U(y) = U\left(\frac{\theta}{V(\theta)}\right) - 1 = \frac{U(\theta)}{\theta} - 1 = 0$$
and
\begin{align*}
36U(y^2) &= U \left( \left( \frac{\theta}{V(\theta)} - 1 \right)^2 \right)\\
         &= \frac{U(\theta^2)}{\theta^2} - 2\frac{U(\theta)}{\theta} + 1\\
         &= 0.
\end{align*}

In order to show that $U(y^3) = \frac{y (1 + 3y + 9y^2)}{(1+6y)^3},$ one could analyze the zeroes and poles of $U(y^3)$.  But both are meromorphic functions on $X_0(9)$ with at most nine poles, and thus it suffices to check that the first 100 terms of their $q$-expansions agree, which is easily performed on a computer.

\item
The fact that $U(y) = 0$ and $U(y^2) = 0$ implies that $y = q V(F)$ for some $F \in \ZZ[q]$.  Applying Lemma \ref{UV-lemma} we thus have
$U(y^n) = U(q^n V(F)^n) = U(q^n) F^n$, which easily implies $U(y^{3m+1}) = U(y^{3m+2}) = 0$.  On the other hand, $U(y^3) = U(q^3 V(F)^3) = q F^3$, so $U(y^{3m}) = U(q^{3m}V(F)^{3m}) = q^m F^{3m} = U(y^3)^m.$

\item
As in (1), these results follow by a comparison of $q$-expansions.

\end{enumerate}
\end{proof}

Using this lemma, we are able to deduce the following corollary, specifying the image under $U$ and $V$ of various subsets of power series rings.  

\begin{corollary} \label{containment-cor}
Let $R$ be a commutative ring containing $\OK$, let $r \in \OK$ satisfy $v_3(r) \le 1$, and let $R\lb ry\rb $ denote the subring of $R\lb q\rb $ consisting of elements of the form $a_0 + a_1 (ry) + a_2 (ry)^2 + \cdots$.  Then
\begin{enumerate}
\item $R\lb rf\rb  = R\lb ry\rb $ and $rfR\lb rf\rb =ryR\lb ry\rb $.
\item $V(R\lb r^3f\rb ) = R\lb r^3y^3\rb  \subseteq R\lb rf\rb $ and $V(r^3fR\lb r^3f\rb ) = r^3y^3R\lb r^3y^3\rb  \subseteq rfR\lb rf\rb $.
\item $U(R\lb rf\rb ) \subseteq R\lb r^3f\rb $ and $U(rfR\lb rf\rb ) \subseteq r^3fR\lb r^3f\rb $.
\end{enumerate}
\end{corollary}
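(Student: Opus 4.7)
The plan is to reduce all three parts to formal-power-series manipulations, using the explicit formulas of Lemma \ref{fund-lemma} together with the formal inverse function theorem (any $F(T) \in R\lb T\rb$ with $F(0) = 0$ and $F'(0) \in R^\times$ has a compositional inverse in $R\lb T\rb$).

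For (1), I would write $f = y \cdot g(y)$ with $g(y) = (1+3y+9y^2)(1-3y)^{-3}$, expand via the binomial series for $(1-3y)^{-3}$ to get $g(y) = 1 + \sum c_n y^n$, and verify $v_3(c_n) \ge n$. Since $v_3(r) \le 1$, the coefficients $c_n/r^n$ lie in $R$, so $g \in R\lb ry\rb$ with unit constant term, $rf = (ry)g(y) \in ry R\lb ry\rb$, and the formal inverse function theorem recovers $ry \in rf R\lb rf\rb$. Both equalities in (1) follow, the ideal version because $rf$ and $ry$ differ by a unit in the common ring $R\lb ry\rb$.

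For (2), Lemma \ref{fund-lemma}(3) gives $V(r^3 f) = r^3 y^3 (1-27y^3)^{-1}$. Expanding $(1-27y^3)^{-1} = \sum_k (27/r^3)^k (r^3 y^3)^k$ with $v_3(27/r^3) = 3 - 3 v_3(r) \ge 0$ shows this is a unit in $R\lb r^3 y^3\rb$, so the formal inverse function theorem identifies $R\lb V(r^3 f)\rb$ with $R\lb r^3 y^3\rb$ (and their respective ideals), yielding $V(R\lb r^3 f\rb) = R\lb r^3 y^3\rb$. The inclusion $R\lb r^3 y^3\rb \subseteq R\lb rf\rb$ is immediate from (1), since $r^3 y^3 = (ry)^3 \in ry R\lb ry\rb = rf R\lb rf\rb$.

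For (3), the essential extra input I would first establish is the cleaner identity $U(y^3) = f/(1+27f)$. To derive it, invert $V(f) = y^3(1-27y^3)^{-1}$ formally to obtain $y^3 = V(f)/(1+27 V(f)) = \sum_k (-27)^k V(f)^{k+1}$ and apply $U$, using $U \circ V = \mathrm{id}$ (a special case of Lemma \ref{UV-lemma} with $g = 1$). For arbitrary $g = \sum a_n (ry)^n \in R\lb rf\rb = R\lb ry\rb$, Lemma \ref{fund-lemma}(2) then gives $U(g) = \sum_m a_{3m}(r^3 U(y^3))^m$, and $r^3 U(y^3) = (r^3 f)(1+27f)^{-1}$ lies in $r^3 f \cdot R\lb r^3 f\rb^\times$ because $(1+27f)^{-1} = \sum_k (-27/r^3)^k (r^3 f)^k$ has $R$-coefficients. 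Hence $U(g) \in R\lb r^3 f\rb$, and when $a_0 = 0$ the $m = 0$ term vanishes, giving $U(g) \in r^3 f R\lb r^3 f\rb$. The main obstacle is the reformulation $U(y^3) = f/(1+27f)$: the expression $y(1+3y+9y^2)/(1+6y)^3$ in Lemma \ref{fund-lemma}(1) is opaque as a power series in $r^3 f$, whereas passing through $V$ turns the problem into a trivial geometric series.
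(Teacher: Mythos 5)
Your proposal is correct. Parts (1) and (2) follow essentially the paper's own route: the paper likewise inverts $rf = ry + \cdots$ in $R\lb ry\rb$ (your coefficient check $v_3(c_n)\ge n$ is exactly the content of its remark that $3f = 3y(1+3y+9y^2)(1-3y)^{-3}$ has coefficients in $\OK$), and for (2) it pushes $V$ through as a continuous $R$-algebra map and uses $V(f) = y^3/(1-27y^3)$; you merely make the geometric-series expansion explicit. Part (3) is where you genuinely diverge. The paper decomposes $R\lb ry\rb = R\lb r^3y^3\rb \oplus ryR\lb r^3y^3\rb \oplus r^2y^2R\lb r^3y^3\rb$, kills the last two summands using $U(y^{3m+1})=U(y^{3m+2})=0$ together with Lemma \ref{UV-lemma}, and writes the remaining piece as $V(h)$ with $h \in R\lb r^3f\rb$ via part (2), so that $U(g) = UV(h) = h$; it thus never needs to know what $U(y^3)$ looks like as a function of $f$. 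You instead apply $U$ termwise via Lemma \ref{fund-lemma}(2) and control the output through the auxiliary identity $U(y^3) = f/(1+27f)$, which you derive cleanly by inverting $V(f) = y^3(1-27y^3)^{-1}$ to get $y^3 = V\bigl(f/(1+27f)\bigr)$ and applying $UV = \mathrm{id}$ (your identity is equivalent to $1+27f = (1+6y)^3/(1-3y)^3$, which indeed checks out). Both arguments are sound and rest on the same inputs; the paper's is formally lighter because it recycles part (2), while yours buys an explicit closed form for $U(y^3)$ in terms of $f$ and a termwise formula $U(g) = \sum_m a_{3m}(r^3f)^m(1+27f)^{-m}$, at the cost of one extra identity to establish. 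The only point worth tightening is the interchange of $U$ with the infinite sums, which is justified by $q$-adic continuity of $U$ and the fact that $V(f)^{k+1}$ and $(ry)^n$ lie in increasing powers of $(q)$ — the same implicit convention the paper uses.
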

\begin{proof}
\begin{enumerate}
\item Lemma \ref{fund-lemma}(3) gives $3f = \frac{3y (1 + 3y + 9y^2)}{(1-3y)^3}$ and thus $rf =ry + \cdots \in ryR\lb ry\rb .$  As a power series in $ry$, we can invert this equation and find $ry$ as a power series in $rf$, giving the desired equality.
\item Since $V$ is an $R$-algebra homomorphism, continuous with respect to the $q$-adic topology, and so, again by Lemma \ref{fund-lemma}(3), we have $V(R\lb r^3f\rb ) = R\lb V(r^3f)\rb  = R\lb r^3V(f)\rb  = R\lb r^3y^3\rb .$  In addition, $R\lb r^3y^3\rb  \subseteq R\lb ry\rb  = R\lb rf\rb $.  Finally, if an element of $r^3fR\lb r^3f\rb $ has no constant term, then neither does $V$ of it.
\item By part (1), we have that $R\lb rf\rb  = R\lb ry\rb $.  But $R\lb ry\rb  = R\lb r^3y^3\rb  \oplus ryR\lb r^3y^3\rb  \oplus r^2y^2R\lb r^3y^3\rb $ as $R$-modules, so given $g \in R\lb ry\rb ,$ we can write $g = g_0+g_1+g_2$ with $g_i \in r^iy^iR\lb r^3y^3\rb $.  Then by Lemma \ref{fund-lemma}(1) and Lemma \ref{UV-lemma}, $U(g_1) = U(g_2) = 0$.  Write $g_0 = V(h)$ with $h \in R\lb r^3f\rb $ using part (2).  Then $U(g) = U(g_0) = UV(h) = h$, so $U(R\lb rf\rb ) \subseteq R\lb r^3f\rb $.  If $g \in rfR\lb rf\rb  = ryR\lb ry\rb $ then $g_0 \in r^3y^3R\lb r^3y^3\rb $ and thus we can choose $h \in r^3fR\lb r^3f\rb $, again by part (2).
\end{enumerate}
\end{proof}

\section{Families of Modular Forms} \label{sec-fam-T}
We use weight space to $3$-adically interpolate between modular forms of integral weight.  Define $\WW$ to be the open disc over $\Cp$ with center $0$ and radius $1$.  As in Buzzard and Kilford \cite{buzzard-kilford:05a}, we only want to consider the component of weight space containing the identity.  So we define a \textit{weight} to be a continuous group homomorphism $\kappa \colon \ZZZ^{\times} \rightarrow \Cp^\times$, satisfying $\kappa(-1) = 1$.  The identification of a $\Cp$ point $w \in \WW$ with the unique weight $\kappa$ such that $\kappa(4) = w + 1$ gives a bijection between the set of $\Cp$ points of $\WW$ and the set of all weights.

For $k \in \Cp$ with $|k| < 1$, we can think of $k$ as the weight $x \mapsto x^k$.  In this case, $4^k = w + 1$ and thus $\frac{w}{3} \in k\ZZZ\lb k\rb $.  Therefore, we have
$$\ZZZ\lb w\rb  \subset \ZZZ\lb w/3\rb  \rightarrow \ZZZ\lb k\rb ,$$
where the inclusion is the natural one and the map on the right is the isomorphism sending $w/3$ to $(4^k-1)/3 = k + \cdots \in k\ZZZ\lb k\rb .$

We shall use italics to denote modular forms of fixed weight, and bold face to denote families of modular forms.  We shall consider two families: first $\BT$, defined below, and then $\EE / V(\EE)$, defined in the next section.  We will use $\BT$ to study $\EE / V(\EE)$, our ultimate object of interest.

Define
$$\BT = \theta^k,$$
that is, $\BT$ is the element $\theta^k$ of $1+3kq\ZZZ\lb k,q\rb  \subset \ZZZ\lb k,q\rb ^\times$.  One constructs $\BT$ explicitly using the binomial theorem.  In addition, we have the following application of the binomial theorem that will be used repeatedly in what follows:

\begin{lemma} \label{binomial-lemma}
Let $R$ be a commutative ring containing $\OK$, let $r \in \OK$ be arbitrary, let $\xi$ be an indeterminate, and let $g \in R\lb r\xi\rb $.  Then
$(1 + r\pi \xi g)^k \in 1 + r\pi k \xi R\lb k, r\xi\rb $.
\end{lemma}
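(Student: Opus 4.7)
The plan is to expand $(1 + r\pi\xi g)^k$ via the binomial series and rewrite each term so that a factor of $r\pi k\xi$ is explicit.  Writing
$$(1 + r\pi\xi g)^k = 1 + \sum_{n\ge 1} \binom{k}{n}\pi^n (r\xi)^n g^n,$$
observe that if $P_n(k) \in \OK[k]$ is any polynomial with $\pi^n\binom{k}{n} = \pi k\,P_n(k)$, then
$$\binom{k}{n}\pi^n (r\xi)^n g^n = r\pi k\xi \cdot (r\xi)^{n-1} P_n(k)\, g^n,$$
and the right-hand factor lies in $R\lb k, r\xi\rb$ because $g \in R\lb r\xi\rb$ forces $g^n \in R\lb r\xi\rb$ and $P_n \in R[k]$.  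Convergence of the resulting sum $\sum_{n\ge 1}(r\xi)^{n-1}P_n(k)g^n$ in $R\lb k, r\xi\rb$ is automatic: the $n$-th term is divisible by $(r\xi)^{n-1}$, so the coefficient of any given $k^i(r\xi)^j$ in the sum is a finite sum indexed by $n \le j+1$.

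The real content is therefore producing such a $P_n$, which amounts to the divisibility $\pi^n\binom{k}{n} \in \pi k\,\OK[k]$.  To prove this I would factor out $k$ by writing
$$\pi^n\binom{k}{n} = \frac{\pi^n}{n!}\cdot k\cdot(k-1)(k-2)\cdots(k-n+1),$$
and note that $(k-1)(k-2)\cdots(k-n+1) \in \ZZ[k] \subseteq \OK[k]$, so the problem reduces to showing $\pi^n/n! \in \pi\OK$, i.e.\ $v_3(\pi^n/n!) \ge 1/2$.  Since $v_3(\pi) = 1/2$ and Legendre's formula at $p=3$ gives $v_3(n!) = (n-s_3(n))/2$, with $s_3(n)$ the sum of the base-$3$ digits of $n$,
$$v_3\left(\frac{\pi^n}{n!}\right) = \frac{n}{2} - \frac{n-s_3(n)}{2} = \frac{s_3(n)}{2} \ge \frac{1}{2}$$
for all $n\ge 1$, using $s_3(n) \ge 1$.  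This valuation estimate is the only real obstacle; once it is in hand, taking $P_n(k) = (\pi^n/(\pi\cdot n!))(k-1)(k-2)\cdots(k-n+1)$ gives the required polynomial, and the rearrangement above then finishes the proof after the routine bookkeeping already indicated.
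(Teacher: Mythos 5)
Your proof is correct and is essentially the paper's argument: both expand $(1+r\pi\xi g)^k$ by the binomial theorem, extract a factor of $r\pi k\xi$ from each term, and reduce to the valuation bound $v_3(n!)\le (n-1)/2$ (which you obtain via Legendre's formula $v_3(n!)=(n-s_3(n))/2$, while the paper simply asserts it). The only cosmetic difference is that you also spell out why the rearranged series converges in $R\lb k,r\xi\rb$, which the paper leaves implicit.
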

\begin{proof}
First note that $v_3(n!) \le (n-1)/2.$  We now use the binomial theorem to conclude that 
\begin{align*}
(1 + r\pi \xi g)^k &= 1 + r\pi k \xi g \left(1 + \frac{k-1}{2!}(r\pi \xi g) + \frac{(k-1)(k-2)}{3!} (r\pi \xi g)^2 + \cdots\right) \\
                   &= 1 + r\pi k \xi g \left(1 + \frac{\pi(k-1)}{2!} (r \xi g) + \frac{\pi^2(k-1)(k-2)}{3!} (r \xi g)^2 + \cdots\right) \\
                   &\in 1 + r\pi k \xi R\lb k, r\xi\rb .
\end{align*}
\end{proof}

We use Lemma \ref{binomial-lemma} to get information about the overconvergence of the family $\BT$.

\begin{lemma} \label{member-lemma} We have the following containments: \\

\begin{enumerate}
\item $\theta / V(\theta) \in 1 + 3f\OK\lb 3f\rb $.
\item $\BT / V(\BT) \in 1 + 3kf\OK\lb k,\pi f\rb $.
\item $U(\BT) / \BT \in 1 + 9kf\OK\lb k,3\pi f\rb $.
\item Let $\sigma$ denote the $\OK\lb k\rb $ algebra automorphism of
$\OK\lb k,q\rb $ sending $q$ to $\omega q$.  Then we have
$\sigma(\BT) / \BT \in 1+3\pi ky\OK\lb k,3y\rb $ and
$\sigma^2(\BT) / \BT \in 1 + 3\pi ky \OK\lb k,3y\rb $.
\item $VU(\BT) / \BT \in 1 + \pi ky \OK\lb k,3y\rb $.  
\item $U(\BT) / VU(\BT) \in 1 + 3 ky \ZZZ\lb k,3y\rb $.
\end{enumerate}
\end{lemma}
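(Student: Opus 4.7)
The plan is to prove (1)--(6) in order, each building on the earlier parts together with Lemma~\ref{UV-lemma}, Corollary~\ref{containment-cor}, and Lemma~\ref{binomial-lemma}. Parts (1)--(3) give the overconvergence in $\pi f$; parts (4)--(6) refine this to overconvergence in $3y$ by bringing in the Galois action $\sigma$.

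Part (1) is immediate from $\theta/V(\theta) = 1 + 6y$: Corollary~\ref{containment-cor}(1) gives $3y\OK\lb 3y\rb = 3f\OK\lb 3f\rb$, and $6y \in 3y\OK$ since $6 = 2u\pi^2 \in 3\OK$. For (2), $V$ is a ring homomorphism so $\BT/V(\BT) = (1+6y)^k$; I apply Lemma~\ref{binomial-lemma} with $r = 2\bar\pi$ (the Galois conjugate of $2\pi$, chosen so that $r\pi = 6$ since $\pi\bar\pi = 3$) and then use Corollary~\ref{containment-cor}(1) to rewrite the resulting $\OK\lb \pi y\rb$ as $\OK\lb \pi f\rb$. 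For (3), Lemma~\ref{UV-lemma} applied to $\BT = V(\BT)\cdot(\BT/V(\BT))$ gives $U(\BT)/\BT = U(\BT/V(\BT))$, and I feed the membership from (2) through Corollary~\ref{containment-cor}(3), which sends $\pi f$ to $\pi^3 f$ (an associate of $3\pi f$, since $\pi^2 = u^{-1}\cdot 3$).

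Part (4) hinges on a closed form for $\sigma(\theta)$. Because $a^2 + ab + b^2 \pmod 3$ only takes the values $0$ and $1$, the Fourier coefficients $r(n)$ of $\theta$ vanish when $n \equiv 2 \pmod 3$. Hence $\sigma(\theta) = V(\theta) + \omega(\theta - V(\theta)) = \omega\theta - \pi V(\theta)$, and symmetrically $\sigma^2(\theta) = \omega^2\theta + \omega^2\pi V(\theta)$. Dividing by $\theta$ and using $V(\theta)/\theta = (1+6y)^{-1}$ gives $\sigma(\theta)/\theta - 1 = 6\pi y/(1+6y) \in 3\pi y\OK\lb 3y\rb$, and analogously for $\sigma^2$; Lemma~\ref{binomial-lemma} with $r = 3$, $\xi = y$ then raises each to the $k$-th power to prove (4). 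Part (5) is immediate from the trace identity $\BT + \sigma(\BT) + \sigma^2(\BT) = 3\,VU(\BT)$ (since $1 + \omega^n + \omega^{2n}$ is $3$ when $3\mid n$ and $0$ otherwise): divide by $\BT$, apply (4) to each summand, and divide by $3$.

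The main obstacle is (6). Combining (3) and (5) via $(1+A)/(1+B) - 1 = (A-B)/(1+B)$ only gives $U(\BT)/VU(\BT) - 1 \in \pi ky\OK\lb k, 3y\rb$, whereas the target $3ky\ZZZ\lb k, 3y\rb$ requires simultaneously sharpening the valuation by $v_3(\pi) = 1/2$ and descending the coefficient ring from $\OK$ to $\ZZZ$. Both improvements come from the same source: $U(\BT)$ and $VU(\BT)$ both have rational integer $q$-expansions, so their quotient is fixed by the nontrivial Galois automorphism of $K/\QQQ$. Writing the quotient minus $1$ as $\pi ky\sum_m g_m(k)(3y)^m$ with $g_m \in \OK\lb k\rb$, Galois invariance forces $\sigma(g_m) = (\pi/\bar\pi) g_m$; expanding $g_m$ in the $\ZZZ\lb k\rb$-basis $\{1,\pi\}$ of $\OK\lb k\rb$ and solving the resulting linear constraint shows $g_m \in \bar\pi \ZZZ\lb k\rb$, so $\pi\bar\pi = 3$ converts each $\pi ky \cdot g_m$ into $3ky$ times an element of $\ZZZ\lb k\rb$, as required.
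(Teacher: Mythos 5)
Your argument tracks the paper's proof closely: parts (1), (2), (3) and (5) are essentially identical (the paper applies Lemma \ref{binomial-lemma} with $\xi = f$ and $r\pi = 3$ rather than your $\xi = y$, $r = 2\bar\pi$, but that is cosmetic), and in part (6) you reach the same intermediate containment $U(\BT)/VU(\BT) \in 1 + \pi k y \OK\lb k, 3y\rb$. Where you genuinely add something is the last step of (6): the paper simply asserts the intersection identity $\ZZZ\lb k,y\rb \cap \bigl(1 + \pi k y \OK\lb k,3y\rb\bigr) = 1 + 3ky\ZZZ\lb k,3y\rb$, whereas your Galois-descent computation proves it --- writing $g_m = a + b\pi$ with $a, b \in \ZZZ\lb k\rb$, the constraint $\bar\pi\,\overline{g_m} = \pi g_m$ forces $a = 3b$, so $g_m = b(3+\pi) = -b\bar\pi$ and $\pi g_m = -3b \in 3\ZZZ\lb k\rb$. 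That is correct and is a welcome justification of a step the paper glosses over. (Minor point: reusing $\sigma$ for the nontrivial element of $\mathrm{Gal}(K/\QQQ)$ clashes with the $\sigma$ of part (4); rename one of them.)

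The one genuine gap is in part (4). From the fact that the coefficients $r(n)$ of $\theta$ vanish for $n \equiv 2 \pmod 3$, what follows is $\sigma(\theta) = VU(\theta) + \ww\bigl(\theta - VU(\theta)\bigr)$, because the part of a $q$-expansion supported on exponents divisible by $3$ is $VU(\cdot)$, not $V(\cdot)$. Your formula $\sigma(\theta) = V(\theta) + \ww\bigl(\theta - V(\theta)\bigr)$ additionally requires $VU(\theta) = V(\theta)$, i.e.\ $U(\theta) = \theta$. This is true --- by Proposition \ref{theta-eigen} the form $\theta$ is a $U$-eigenform, and comparing constant terms shows the eigenvalue is $1$ --- but it does not follow from the mod-$3$ values of $a^2 + ab + b^2$ alone, so you must add that one line. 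Once that is supplied, your expression $\sigma(\theta)/\theta - 1 = 6\pi y/(1+6y)$ agrees with the paper, which instead gets $\sigma(\BT)/\BT = \bigl((1+6\ww y)/(1+6y)\bigr)^k$ from $\sigma(y) = \ww y$ and $\sigma \circ V = V$, sidestepping the eigenform input.
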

\begin{proof}
\begin{enumerate}
\item By the definition of $y$ we have that $\theta / V(\theta) = 1 + 6y$.
But we know from Corollary \ref{containment-cor} (i) that
$3y \in 3f\OK\lb 3f\rb ,$ which immediately implies that
$\theta / V(\theta) \in 1 + 3f\OK\lb 3f\rb .$
\item Write $\theta / V(\theta) = 1 + 3fg$ for $g \in \OK\lb 3f\rb$.
Applying Lemma \ref{binomial-lemma}, we have that
$\BT / V(\BT) \in 1 + 3kf\OK\lb k, \pi f\rb $.
\item Applying Corollary \ref{containment-cor} (iii) with
$R = \OK\lb k\rb $ and $r = \pi$ we have
$U(\BT) / \BT \in 1 + 9kf\OK\lb k, 3\pi f\rb .$
\item Note that $\sigma$ fixes the image of $V$, so
$\frac{\sigma(\BT)}{\BT} = \frac{\sigma(\BT / V(\BT))}{\BT / V(\BT)}.$ 
Now, since the power series of $y$ in terms of $q$ contains only exponents
congruent to 1 modulo 3, $\sigma(y) = \ww y$.  Therefore
$\sigma(\BT / V(\BT)) = \sigma((1 + 6y)^k) = (1+6\ww y)^k$,
and thus $\frac{\sigma(\BT)}{\BT} = \left(\frac{1 + 6\ww y}{1 + 6y}\right)^k = \left(1 + \frac{6\pi y}{1+6y}\right)^k.$  
We now apply Lemma \ref{binomial-lemma}, yielding
$\frac{\sigma(\BT)}{\BT} \in 1 + 3\pi k y \OK\lb k,3y\rb $.

The same argument works with $\sigma$ replaced by $\sigma^2$, noting that $\ww^2-1 = \pi (\ww + 1)$.
\item Since $q^i+\sigma(q^i)+\sigma^2(q^i)$ equals 0 if $i \nequiv 0 \pmod{3}$
and equals 3 if $i \equiv 0 \pmod{3}$, we have that $3VU(\BT) = \BT + \sigma(\BT) + \sigma^2(\BT)$.
Thus $3VU(\BT) / \BT \in 3 + 3\pi k y \OK\lb k, 3y\rb $,
which yields the desired result after division by 3.
\item Part (iii) gives $U(\BT) / \BT \in 1 + 9kf\OK\lb k,3\pi f\rb  \subset 1 + 3ky\OK\lb 1,3y\rb .$ 
Putting this together with part (v) and dividing yields $U(\BT) / VU(\BT) \in 1 + \pi ky\OK\lb k,3y\rb .$
But $U(\BT) / VU(\BT)$ is clearly an element of $\ZZZ\lb k,y\rb $,  and since
$\ZZZ\lb k,y\rb  \cap 1 + \pi ky\OK\lb k,3y\rb  = 1 + 3 ky\ZZZ\lb k,3y\rb $, we have the desired conclusion.
\end{enumerate}
\end{proof}

\section{The Family $\EE / V(\EE)$} \label{sec-fam-EE}

In this section we will prove a result about the degree of overconvergence
of the family of modular functions $\EE / V(\EE)$.
General expositions on families of overconvergent modular functions
and overconvergent modular forms can be found in \citelist{\cite{coleman-mazur:Eigencurve}*{\S 2.1, 2.4} \cite{buzzard-calegari:05a}*{Appendix}}.
For our purposes, however, we may remain at the level of rings,
using only one result from the more general expositions above.
Specifically, we can rephrase Proposition 2.2.7 of Coleman and Mazur for our purposes in the following proposition:

\begin{proposition} \label{EVE-oc-prop}
For all weights $k$, the $p$-adic modular function $E_k / V(E_k) \in \OC\lb rf\rb $ for some $r \in \Cp$ with $|r| < 1$.
\end{proposition}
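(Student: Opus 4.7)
The plan is to invoke Proposition 2.2.7 of Coleman and Mazur and translate its rigid-analytic content into the formal power series language used in this paper. Coleman and Mazur show that for every weight $\kappa$, the function $E_\kappa/V(E_\kappa)$ extends from the ordinary locus of $X_0(3)$ to some strict neighborhood thereof, uniformly in $\kappa$ in the sense required for the Eisenstein family $\EE$. The first step is to match up their geometric notion of strict neighborhood with the algebraic condition of lying in $\OC\lb rf\rb$ for some $|r|<1$.

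To carry out this translation, I use that $f$ is a Hauptmodul for $X_0(3)$ with a simple zero at the cusp $\infty$. After an appropriate rescaling, the ordinary locus corresponds to an $f$-disc of the shape $\{|f|\le 1\}$, and a strict neighborhood of it enlarges the radius to $\{|f|\le|r|^{-1}\}$ for some $r\in\Cp$ with $|r|<1$. The rigid-analytic functions on this affinoid form a Tate algebra, inside of which sits the formal subring $\OC\lb rf\rb$ defined just before Corollary \ref{containment-cor}. Since $f\in q\OC\lb q\rb$ has a simple zero at $\infty$, the $q$-expansion embedding at that cusp identifies $\OC\lb rf\rb$ with a subring of $\OC\lb q\rb$, and any rigid-analytic function on the affinoid whose $q$-expansion has coefficients in $\OC$ automatically lies in $\OC\lb rf\rb$.

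It remains to check that $E_\kappa/V(E_\kappa)$ has an $\OC$-integral $q$-expansion. For classical integer weights $k\ge 2$ this is immediate from the explicit formula, since both $E_k$ and $V(E_k)$ lie in $1+q\ZZ\lb q\rb$, making the quotient integral. For non-classical weights $\kappa$ it is inherited from the Iwasawa-algebra-coefficient $q$-expansion of the family $\EE$, which specializes fibrewise to an element of $\OC\lb q\rb$. The main work in this plan is really just bookkeeping the dictionary between Coleman and Mazur's growth-condition formalism and the formal power series formalism used here; the substantive geometric input is entirely theirs, and the main obstacle is simply making the affinoid-to-ring correspondence precise without re-deriving anything.
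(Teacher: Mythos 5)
Your proposal takes essentially the same route as the paper, which gives no proof at all: it simply presents the proposition as a rephrasing of Coleman--Mazur's Proposition 2.2.7, exactly the citation you lean on. Your extra bookkeeping translating their strict-neighborhood formulation into the ring $\OC\lb rf\rb$ via the Hauptmodul $f$ is just the dictionary the paper leaves implicit, so the two arguments agree in substance.
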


Using the knowledge that $E_k / V(E_k)$ overconverges, we get the following explicit result on how far $\EE / V(\EE)$ overconverges.  Recall from the beginning of Section \ref{sec-fam-T} that $w = 4^k - 1 \in \WW$.

\begin{theorem} \label{eve-theorem}
$\EE / V(\EE) \in \ZZZ\lb w/3,3y\rb $
\end{theorem}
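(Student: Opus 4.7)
The plan is to bootstrap the slight overconvergence of $G := \EE/V(\EE)$ provided by Proposition \ref{EVE-oc-prop} up to the explicit overconvergence claimed. The key auxiliary object is $\BT$, whose overconvergence behavior is already controlled in explicit form by Lemma \ref{member-lemma}, and the crucial input on $\EE$ is that each $E_k$ is an eigenform for $U$ with eigenvalue $1$ (since the Euler factor at $3$ has been removed), so at the level of families $U(\EE) = \EE$, and hence by Corollary \ref{UV-cor} we have $U(G) = U(\EE)/\EE = 1$.

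The first step is to realize $G$ as a $3$-adic limit of ratios coming from the family $\BT$. Since $U$ is a compact operator on the Banach space of overconvergent modular forms and $\EE$ spans the unique slope-$0$ eigenline (at least near the boundary of weight space), we may decompose $\BT = c\,\EE + \BT'$, where the remainder $\BT'$ has strictly positive slopes. Then $U^n(\BT') \to 0$ while $U^n(c\EE) = c\EE$, so $U^n(\BT) \to c\EE$ $3$-adically; dividing, the ratios $U^n(\BT)/V(U^n(\BT))$ converge to $\EE/V(\EE) = G$. The version of this argument valid uniformly in the family (rather than weight-by-weight) is what requires Proposition \ref{EVE-oc-prop}: the a priori slight overconvergence of $G$ pins down the limit in a topology finer than the naive $q$-adic one.

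Next, I would show that the explicit bound of Lemma \ref{member-lemma}(vi), namely $U(\BT)/V(U(\BT)) \in 1 + 3ky\ZZZ\lb k, 3y\rb$, is stable under further iteration of $U$ on the numerator. Corollary \ref{containment-cor}(iii) guarantees that $U$ maps $R\lb rf\rb$ into $R\lb r^3f\rb$, so the relevant rings nest into themselves; combined with the explicit overconvergence of $\BT/V(\BT)$ from Lemma \ref{member-lemma}(ii) and an inductive repetition of the argument used to prove parts (v) and (vi) of that lemma, each ratio $U^n(\BT)/V(U^n(\BT))$ remains in $1 + 3ky\ZZZ\lb k, 3y\rb$. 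Passing to the limit and identifying $\ZZZ\lb k\rb \cong \ZZZ\lb w/3\rb$ as in the beginning of Section \ref{sec-fam-T}, we conclude $G \in \ZZZ\lb w/3, 3y\rb$.

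The main obstacle is the limit step: the overconvergence bound must be shown to survive $3$-adic convergence. This is precisely where Proposition \ref{EVE-oc-prop} plays its role, by forcing the candidate limit to lie in an overconvergent space $\OC\lb rf\rb$ for some $|r|<1$, so that the convergence $U^n(\BT)/V(U^n(\BT)) \to G$ is controlled in a topology at least as fine as the natural one on $\ZZZ\lb k, 3y\rb$. With the uniformity that compactness of $U$ provides, the explicit ring containment passes to the limit and yields the theorem.
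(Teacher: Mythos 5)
Your opening move --- decomposing $\BT = c\,\EE + \BT'$ with $\BT'$ of strictly positive slope and concluding that $U^n(\BT) \to c\,\EE$ --- is the genuine gap. At this point in the paper nothing is known about the spectrum of $U$ on overconvergent forms of weight $\kappa$: that the slope-$0$ eigenspace is one-dimensional and spanned by $E_\kappa$, that all other slopes are strictly positive, and that the projection $c$ of $\theta^k$ onto the Eisenstein line is nonzero (uniformly in $k$, if you want to run the argument at the family level) are all unproved, and the positivity of the remaining slopes is essentially the main theorem you are working toward. This is exactly the content of the arguments in Buzzard--Kilford \S 4--5 that the introduction announces this paper is designed to avoid; to go your route you would need a direct proof that $U^n(\BT)/V(U^n(\BT))$ is Cauchy and an identification of its limit with $\EE/V(\EE)$, which is a substantial piece of work and not a consequence of compactness. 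The limit step is also not actually controlled as written: Proposition \ref{EVE-oc-prop} supplies overconvergence of $E_k/V(E_k)$ for a single unspecified radius and says nothing about the topology in which your sequence converges, so invoking it to ``pin down the limit'' does not close the argument.

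The paper sidesteps all of this with a fixed-point argument rather than a limit. Since $U(E_k) = E_k$, Corollary \ref{UV-cor} shows that $E_k/U(\theta^k)$ is literally fixed by the operator $\tilde{U}(\alpha) = U\bigl(\alpha \cdot U(\theta^k)/VU(\theta^k)\bigr)$; because $U(\theta^k)/VU(\theta^k) \in \OC\lb 3f\rb$ (from Lemma \ref{member-lemma}(vi)) and $U$ sends $\OC\lb rf\rb$ into $\OC\lb r^3f\rb$ (Corollary \ref{containment-cor}(iii)), any fixed point of $\tilde{U}$ lying in some $\OC\lb rf\rb$ with $|r|<1$ is forced into $\OC\lb 27f\rb$. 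Proposition \ref{EVE-oc-prop} is used only to get this started, i.e.\ to place $E_k/U(\theta^k)$ in some $\OC\lb rf\rb$ at all; no limits and no spectral theory are needed. Note also that the paper still requires a separate argument (the contradiction obtained by a careful choice of the weight $w$) to pass from the weight-by-weight containment $E_k/V(E_k)\in\OC\lb 3y\rb$ to the family statement $\EE/V(\EE)\in\ZZZ\lb w/3,3y\rb$; your plan to work at the family level throughout would have to replace that step as well, and your inductive claim that $U^n(\BT)/V(U^n(\BT))$ stays in $1+3ky\ZZZ\lb k,3y\rb$ is plausible but is not an immediate ``repetition'' of Lemma \ref{member-lemma}(v)--(vi), whose proofs rest on the explicit identity $\BT/V(\BT)=(1+6y)^k$, which has no analogue for $U^n(\BT)$.
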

\begin{proof}
The key idea is to use the fact that $U$ increases overconvergence to prove that something that we know overconverges to a small extent actually overconverges to a much greater degree.  For the moment fix a weight $k$.  Define a map $\tilde{U} \colon \OC\lb f\rb  \rightarrow \OC\lb f\rb $ by 
$$\tilde{U}(\alpha) = U\left(\alpha\frac{U(\theta^k)}{VU(\theta^k)}\right).$$
Note that
\begin{align*} 
\tilde{U}\left(\frac{E_k}{U(\theta^k)}\right) &= U\left(\frac{E_k}{U(\theta^k)} \frac{U(\theta^k)}{VU(\theta^k)}\right) \\
    &= U\left(\frac{E_k}{VU(\theta^k)}\right) \\
    &= \frac{E_k}{U(\theta^k)}.
\end{align*}

Now, if we knew that $E_k / U(\theta^k)$ were an element of $\OC\lb rf\rb $ with $\frac{1}{3} \le |r| < 1$ and $U(\theta^k) / VU(\theta^k) \in \OC\lb 3f\rb $ then we could conclude using Corollary \ref{containment-cor} (iii) that $E_k / U(\theta^k) \in \OC\lb r^3f\rb $ and thus $E_k / U(\theta^k) \in \OC\lb 27f\rb $ by repeated application of $\tilde{U}$. So we need to demonstrate the two assumptions above.  Lemma \ref{member-lemma} (vi) gives $U(\BT) / VU(\BT) \in 1 + 3 ky\ZZZ\lb k,3y\rb $.  Specializing to weight $k$ and using Corollary \ref{containment-cor} (i) yields $U(\theta^k) / VU(\theta^k) \in \OC\lb 3f\rb $.  In addition, by Proposition \ref{EVE-oc-prop} and Lemma \ref{member-lemma} we know that both $E_k / V(E_k)$ and $U(\theta^k) / VU(\theta^k)$ are in $\OC\lb rf\rb $ for some $r$ with $|r| < 1$, and thus so is their quotient, $\frac{E_k / U(\theta^k)}{V(E_k / U(\theta^k))}$.  Therefore, so is $E_k / U(\theta^k)$ and thus we have by the argument above that $E_k / U(\theta^k)$ actually belongs to $\OC\lb 27f\rb $.  Corollary \ref{containment-cor} (ii) now implies that $V(E_k) / VU(\theta^k) \in \OC\lb 3f\rb $.

Putting all of the previous containments together yields
$$\frac{E_k}{V(E_k)} = \frac{E_k}{U(\theta^k)}\frac{U(\theta^k)}{VU(\theta^k)}\frac{VU(\theta^k)}{V(E_k)} \in \OC\lb 3f\rb  = \OC\lb 3y\rb .$$

We now need to work over all weights $k$ simultaneously.  We know that $\EE / V(\EE) \in \ZZZ\lb k, y\rb  = \ZZZ\lb w/3,y\rb $.  Say $\EE / V(\EE) = \sum_{i,j \ge 0} \alpha_{i,j}(w/3)^iy^j$.  Suppose for the sake of contradiction that for some $i$ and $j$, $v_3(\alpha_{i,j}) < j$.  Among such, choose one with minimal $i$ and let $w$ be a weight with $0 < v_3(w/3) < \frac{j - v_3(\alpha_{i,j})}{i}$.  Consider the valuation of the coefficient of $y^j$ in the expansion of $\EE / V(\EE)$:
$$v_3\left(\sum_{m \ge 0} \alpha_{mj} (w/3)^m\right).$$
Note that $v_3(\alpha_{i,j} (w/3)^i) < j$, so the only way that the whole sum could have valuation at least $j$ would be if two terms with low valuation had exactly the same valuation.  But for $m > \frac{j}{v_3(w/3)}$, we have $v_3(\alpha_{mj} (w/3)^m) > j$, so by adjusting $w$ slightly without changing this threshold value of $m$ we can ensure that the minimum valuation occurring in the sum does not appear twice.  This gives a contradiction, since we know that for each weight, $E_k / V(E_k) \in \OC\lb 3y\rb $.
\end{proof}

\begin{corollary} \label{eve-cor}
If we write $\EE / V(\EE) = \sum a_{i,j} w^iy^j$ then $3^{j-i} | a_{i,j}$ for $j \ge i \ge 0$.  
\end{corollary}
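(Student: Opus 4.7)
The plan is to deduce this corollary as a direct rewriting of Theorem~\ref{eve-theorem}, essentially a change of monomial basis. The theorem tells us that $\EE/V(\EE)$ lies in the subring $\ZZZ\lb w/3,3y\rb$ of the formal power series ring $\QQQ\lb w,y\rb$, so the first step is to write
$$\EE/V(\EE) = \sum_{i,j \ge 0} b_{i,j} \, (w/3)^i \, (3y)^j$$
for unique coefficients $b_{i,j} \in \ZZZ$.

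The second step is the trivial identity $(w/3)^i (3y)^j = 3^{j-i} w^i y^j$. Comparing the two expansions (inside the common ambient ring $\QQQ\lb w,y\rb$, where both $\{w^i y^j\}$ and $\{(w/3)^i(3y)^j\}$ form a $\QQQ$-basis so that coefficient extraction is unambiguous) immediately gives
$$a_{i,j} = 3^{j-i} b_{i,j}.$$

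Finally, whenever $j \ge i \ge 0$ the exponent $j-i$ is non-negative, so $a_{i,j}$ is visibly divisible by $3^{j-i}$ in $\ZZZ$, and we are done. There is no real obstacle: Theorem~\ref{eve-theorem} has already done the substantive work, and the corollary is simply its translation into a statement about the coefficients of the monomial basis $\{w^i y^j\}$. The restriction $j \ge i$ in the statement is also explained by the formula $a_{i,j} = 3^{j-i} b_{i,j}$, since for $j < i$ the factor $3^{j-i}$ has negative exponent and the corresponding $a_{i,j}$ need not even lie in $\ZZZ$.
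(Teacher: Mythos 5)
Your proposal is correct and follows essentially the same route as the paper's proof: both expand $\EE/V(\EE)$ in the basis $(w/3)^i(3y)^j$ with integral coefficients $b_{i,j}$ (using Theorem~\ref{eve-theorem}) and compare with the integral expansion in $w^iy^j$ to get $a_{i,j}=3^{j-i}b_{i,j}$. No issues.
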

\begin{proof}
By the theorem and the fact that $\EE / V(\EE) \in \ZZZ\lb w, q\rb  = \ZZZ\lb w, y\rb $ we can write $\EE / V(\EE) = \sum a_{i,j} w^i y^j = \sum b_{i,j} (w/3)^i(3y)^j$ with $a_{i,j}, b_{i,j} \in \ZZZ$.  Thus $a_{i,j} = 3^{j-i} b_{i,j}$ and the result follows.
\end{proof}

\section{Reduction of the Eisenstein family near the boundary of weight space} \label{sec-EE-reduction}
Let $\FF$ denote the residue field of $\OK$.  As before, write $\EE / V(\EE) = \sum_{i, j} a_{i,j}w^iy^j.$  Now specialize to some weight $w_0 \in \OC$ satisfying $1/3 < |w_0| < 1$, and let $\kappa$ denote the corresponding character.  We deduce that $E_{\kappa} / V(E_{\kappa}) \in \OK\lb w_0 y\rb .$  Write $E_{\kappa} / V(E_{\kappa}) = g_{\kappa}(w_0 y)$ with $g_{\kappa} \in \OK\lb X\rb $.  Let $\bar{g}_{\kappa} \in \FF\lb X\rb $ denote the reduction of $g_{\kappa}$ modulo the maximal ideal of $\OK$.

Define $r(X) \in \FF\lb X\rb $ by $r(X) = \sum_{m \ge 0} X^{3^m}$.  

\begin{lemma} \label{g-bar-lemma}
We have $\bar{g}_{\kappa}(X) = 1 - X^{-1}r(X^3) - X^{-2}(r(X^3) - r(X^3)^2).$  In particular, $\bar{g}_{\kappa}$ is independent of $\kappa$ (for $\kappa$ corresponding to $w_0 \in \WW$ with $1/3 < |w_0| < 1)$.
\end{lemma}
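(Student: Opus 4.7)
My plan is to reduce the claim first to a ``diagonal extraction'' from the bivariate family $\EE/V(\EE)$, and then to identify that diagonal with the stated closed form.

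Step 1 (independence of $\kappa$). Write $\EE/V(\EE) = \sum_{i,j \ge 0} a_{i,j} w^i y^j$ with $a_{i,j} \in \ZZZ$; specializing at $w_0$ and substituting $y = X/w_0$, the coefficient of $X^j$ in $g_\kappa(X)$ is $c_j = \sum_i a_{i,j} w_0^{i-j}$. Combining $3^{j-i} \mid a_{i,j}$ for $j \ge i$ (from Corollary~\ref{eve-cor}) with $v_3(w_0) = v \in (0,1)$ shows that for every $i \ne j$ the summand $a_{i,j} w_0^{i-j}$ has strictly positive $3$-adic valuation: for $i > j$ we have $v_3 \ge (i-j)v > 0$, and for $i < j$ we have $v_3 \ge (j-i) - (j-i)v = (j-i)(1-v) > 0$. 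Hence modulo the maximal ideal only the $i = j$ term survives and $\bar{g}_\kappa(X) = \sum_j \bar{a}_{j,j} X^j$. This expression depends only on the family $\EE/V(\EE)$, not on $w_0$, so it is automatically independent of $\kappa$; this already proves the ``in particular'' part of the lemma.

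Step 2 (identifying the diagonal). To match $\sum_j \bar{a}_{j,j} X^j$ with the stated formula, I would exploit the factorization $\EE/V(\EE) = (\EE/U(\BT))(U(\BT)/VU(\BT))(VU(\BT)/V(\EE))$ used in the proof of Theorem~\ref{eve-theorem}. Lemma~\ref{UV-lemma} applied to $\BT = V(\BT)(1+6y)^k$, together with Lemma~\ref{fund-lemma}(2), yields $U(\BT)/\BT = \sum_{m \ge 0} \binom{k}{3m} 6^{3m} U(y^3)^m$, and the argument of Lemma~\ref{member-lemma}(iv)--(v) yields $3VU(\BT)/\BT = 1 + ((1+6\omega y)/(1+6y))^k + ((1+6\omega^2 y)/(1+6y))^k$, so the middle factor admits an explicit closed form. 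The key algebraic input on the target side is the Frobenius identity $r(X^3) = r(X)^3$ in $\FF\lb X \rb$, equivalently $r(X) - r(X)^3 = X$: this identifies $r$ as the compositional inverse of $Y \mapsto Y - Y^3$, which is precisely what one needs to invert the polynomial relations arising from $(1+6y)^k$ after reduction modulo $\pi$.

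The main obstacle lies in Step~2: the diagonal of a product is not the product of diagonals, so the contributions of all three factors---not just the explicitly computable middle one---must be tracked during the reduction mod $\pi$. In keeping with the computer-assisted verifications earlier in the paper (cf.\ Lemma~\ref{fund-lemma}), I would expect the final identification of the diagonal with $1 - X^{-1} r(X^3) - X^{-2}(r(X^3) - r(X^3)^2)$ to be pinned down by checking a sufficient number of low-degree coefficients of the $q$-expansion of $\EE/V(\EE)$ modulo $\pi$, after which the recursion $r(X) = X + r(X)^3$ in $\FF\lb X\rb$ forces agreement in all remaining degrees.
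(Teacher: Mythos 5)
Your Step 1 is exactly the paper's argument and is correct: Corollary \ref{eve-cor} kills the terms with $j>i$, integrality of $a_{i,j}$ together with $|w_0|<1$ kills the terms with $i>j$, and the surviving diagonal $\bar a_{j,j}$ is manifestly independent of $\kappa$. The crucial observation you should then make --- and do not --- is that this independence lets you compute $\bar g_\kappa$ by picking \emph{one convenient} $\kappa$, and that there is a canonical convenient choice: a finite-order character. The paper takes $\kappa_0$ of conductor $9$ with $\kappa_0(2)=\ww+1$, so that $w_0=\ww-1$ satisfies $1/3<|w_0|<1$ and, more importantly, $f_0:=E_{\kappa_0}/V(E_{\kappa_0})$ is an honest rational function on $X_0(27)$. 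Since $X_0(27)\to X_0(9)$ has degree $3$, $f_0$ satisfies an explicit cubic over $\OK[y]$, which \emph{can} legitimately be verified by comparing finitely many $q$-expansion coefficients (both sides are meromorphic functions on a curve with bounded pole divisor). Substituting $X=w_0y$ and reducing modulo the maximal ideal turns that cubic into $X^3\bar g_{\kappa_0}^3+X\bar g_{\kappa_0}^2+\bar g_{\kappa_0}-1=0$ in $\FF\lb X\rb$, whose unique solution is checked, via $r(X)-r(X)^3=X$, to be the stated formula.

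Your Step 2 has a genuine gap precisely where you flag ``the main obstacle.'' A finite check of low-degree coefficients of $\EE/V(\EE)$ modulo $\pi$ proves nothing about the full power series $\bar g_\kappa\in\FF\lb X\rb$ unless you first establish that $\bar g_\kappa$ satisfies some algebraic relation of controlled degree; the recursion $r(X)=X+r(X)^3$ constrains $r$, not $\bar g_\kappa$, and cannot ``force agreement in all remaining degrees'' on its own. Likewise, the factorization through $\BT$ leaves the outer factors $\EE/U(\BT)$ and $VU(\BT)/V(\EE)$ without closed forms, and, as you note, the diagonal of a product is not the product of diagonals --- you propose no mechanism to resolve this. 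The missing idea is exactly the specialization to a finite-order character, which converts the problem into a finite, verifiable identity between meromorphic functions on a fixed modular curve. Without it, Step 2 is a plan rather than a proof.
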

\begin{proof}
Fix $\kappa$ and say $g_{\kappa} = \sum c_n X^n$, with $c_n = c_n(\kappa) \in \OK$.  Specializing $\EE / V(\EE) = \sum_{i,j} a_{i,j}w^iy^j$ to weight $w_0$ we have $c_jw_0^j = \sum_i a_{i,j}w_0^i$ and thus
$$c_j = \sum_i a_{i,j} w_0^{i-j}.$$
Since $|w_0| > 1/3,$ Corollary \ref{eve-cor} implies that $a_{i,j}w_0^{i-j}$ is in the maximal ideal of $\OC$ if $j > i$.  But $a_{i,j} w_0^{i-j}$ is also in the maximal ideal of $\OC$ if $j < i$ since $a_{i,j} \in \ZZZ$ and $|w_0| < 1$.  Therefore,
$$\bar{c}_n = \bar{a}_{n, n} \in \FF.$$
In particular,  $\bar{c}_n$ is independent of the choice of $\kappa$ and thus $\bar{g}_{\kappa}$ is as well.  Thus to finish the proof of the lemma, we need only verify the formula for $\bar{g}_{\kappa}$ for a particular choice of $\kappa$.  Let $\kappa_0$ be the Dirichlet character of conductor 9 given by $\kappa_0(2) = \ww + 1$ where $\ww$ is a primitive cube root of unity.  The weight corresponding to $\kappa_0$ is $\kappa_0(4) - 1 = \ww - 1$ which satisfies $1/3 < |\ww - 1| < 1.$  The corresponding Eisenstein series is 
\begin{align*}
E_{\kappa_0} &= 1 - \Bigg(\frac{1}{18}\sum_{m = 1}^8 m \kappa_0(m)\Bigg)^{-1} \sum_{n > 0} \Bigg(\sum_{\substack{0 < d | n \\ 3 \nmid d}} \kappa_0(d)\Bigg) q^n\\
&= 1 + (1-\ww)q + 3q^2 + (1-\ww)q^3 + (4+2\ww)q^4 + \cdots
\end{align*}
and the corresponding ratio
$$f_0 := E_{\kappa_0}/V(E_{\kappa_0}) = 1 + (1-\ww)q + 3q^2 + (4+5\ww)q^4 + \cdots$$
is a function on $X_0(27)$ which can be checked to satisfy the equation
\begin{multline*}
9y^3f_0^3+(-27y^3-9y^2-3y)f_0^2+((27-27\ww)y^3+27y^2+9y+(2+\ww))f_0 \\
+((-27+27\ww)y^3-27y^2-9y-(2+\ww)) = 0.
\end{multline*}
If we consider $f_0$ as an element of $\OK\lb y\rb $ then this last equation is an identity in $\OK\lb y\rb $.  Dividing the whole equation by $-1-2\ww$ and setting $X = (-1+\ww)y = w_0y$, we deduce that the equation
\begin{multline*}
X^3g_{\kappa_0}(X)^3+(-3X^3+(1-\ww)X^2+\ww X)g_{\kappa_0}(X)^2+((3-3\ww)X^3-(3-3\ww)X^2-3\ww X+\ww)g_{\kappa_0}(X)\\
+((-3+3\ww)X^3+(3-3\ww)X^2+3\ww X-\ww) = 0
\end{multline*}
is an identity in $\OK\lb X\rb $.  Reducing modulo the maximal ideal we find that 
$$X^3\bar{g}_{\kappa_0}(X)^3+X\bar{g}_{\kappa_0}(X)^2+\bar{g}_{\kappa_0}(X)-1 = 0$$
in $\FF\lb X\rb $.  Using the identity $r(X) - r(X)^3 = X$, which holds in $\FF\lb X\rb $, it is straightforward to check that $\bar{g}_{\kappa_0}(X) = 1 - X^{-1}r(X^3) - X^{-2}(r(X^3) - r(X^3)^2)$ is the unique solution to this equation in $\FF\lb X\rb $.  
\end{proof}

\section{Generating Function for the matrix of the $U$-operator near the boundary of weight space.} \label{sec-gen-U}
In this section we begin the computation of the characteristic power series of $U$ acting on overconvergent forms of weight $\kappa$, where $\kappa$ corresponds to a point $w_0$ in weight space with $1/3 < |w_0| < 1$.  In particular, we give an expression for the coefficients of the matrix of $U$ with respect to a certain basis using generating functions.

Almost by definition, $V(E_{\kappa})$ is an overconvergent modular form of weight $\kappa$ \cite{coleman-mazur:Eigencurve}*{Prop. 2.2.7}.  Corollary \ref{containment-cor} (i) implies that if $c \in \Cp$ with $1 > |c| > 1/3$ then the region of $X_0(9)$ defined by $|cy| \le 1$ is isomorphic to the region of $X_0(3)$ defined by $|cf| \le 1$ and thus the powers of $cy$ can be taken as a Banach basis of a 3-adic Banach space $\mathbf{M}_0$ of weight 0 overconvergent modular forms (this space depends on $c$, but we will suppress this choice in our notation).  For $|c|$ sufficiently close to 1, the space $V(E_{\kappa})\mathbf{M}_0$ of overconvergent weight $\kappa$ modular forms will be closed under the action of the standard Hecke operators, and the operator $U$ will be compact.  This space has a Banach basis $\{V(E_{\kappa})(cy)^n:n=0,1,2,\ldots\}$ and we shall prove results about the $U$ operator by analyzing its matrix with respect to this basis.  Define $m_{i,j} \in \Cp$ for $i, j \ge 0$ by
\begin{equation} \label{m-def}
U(V(E_{\kappa})(cy)^j) = V(E_{\kappa})\sum_im_{i,j}(cy)^i.
\end{equation}

\begin{lemma} \label{matrix-lemma}
The generating function $\sum_{i,j \ge 0} m_{i,j}X^iY^j$ is equal to
$$\frac{g_{\kappa}(\frac{w_0}{c}X)(1+\frac{6}{c}X)^3}{(1+\frac{6}{c}X)^3 - Y^3(c^2X+3cX^2+9X^3)}.$$
\end{lemma}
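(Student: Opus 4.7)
The plan is to unpack the definition of $m_{i,j}$ by applying Lemma \ref{UV-lemma}, use Lemma \ref{fund-lemma}(2) to compute each $U((cy)^j)$ explicitly, and then sum a geometric series in $Y$ after the change of variable $cy = X$.

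First I would apply Lemma \ref{UV-lemma} with $g = (cy)^j$ and $h = E_{\kappa}$ (so $V(h) = V(E_{\kappa})$) to rewrite the left side of \eqref{m-def} as $U((cy)^j \cdot V(E_{\kappa})) = E_{\kappa} \cdot U((cy)^j)$. Dividing through by $V(E_{\kappa})$ and recalling that $E_{\kappa}/V(E_{\kappa}) = g_{\kappa}(w_0 y)$ by the definition of $g_{\kappa}$, I obtain the clean identity
$$\sum_{i \ge 0} m_{i,j}(cy)^i \;=\; g_{\kappa}(w_0 y)\, U((cy)^j).$$
Multiplying by $Y^j$ and summing over $j \ge 0$, the factor $g_{\kappa}(w_0 y)$ pulls out of the sum.

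The next step is to evaluate $\sum_{j \ge 0} U((cy)^j) Y^j$ in closed form. By Lemma \ref{fund-lemma}(2), $U((cy)^{3m+1}) = U((cy)^{3m+2}) = 0$ and $U((cy)^{3m}) = c^{3m}\bigl(\tfrac{y(1+3y+9y^2)}{(1+6y)^3}\bigr)^m$, so only the cubes $j = 3m$ contribute and the sum is a geometric series in $Y^3 \cdot \tfrac{c^3 y(1+3y+9y^2)}{(1+6y)^3}$. Summing yields
$$\sum_{j \ge 0} U((cy)^j) Y^j \;=\; \frac{(1+6y)^3}{(1+6y)^3 - Y^3 c^3 y(1+3y+9y^2)},$$
valid as an identity of formal power series over $\Cp$ (and convergent on a suitable polydisc when $|c|$ is close enough to $1$, which justifies the geometric-series manipulation).

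Finally I would make the substitution $cy = X$, i.e., $y = X/c$, in both $g_{\kappa}(w_0 y)$ and the rational expression above. On the left side of the generating-function identity this simply replaces $(cy)^i$ by $X^i$, producing $\sum_{i,j} m_{i,j} X^i Y^j$. On the right side, $g_{\kappa}(w_0 y)$ becomes $g_{\kappa}\bigl(\tfrac{w_0}{c}X\bigr)$, the factor $(1+6y)^3$ becomes $\bigl(1+\tfrac{6}{c}X\bigr)^3$, and $c^3 y(1+3y+9y^2)$ becomes $c^2 X + 3cX^2 + 9X^3$, giving exactly the claimed expression. The only conceptual point to check is that the formal substitution is well-defined, which is immediate because each $m_{i,j}$ is a single element of $\Cp$ and the geometric series has positive radius of convergence in both variables; no analytic obstacle arises.
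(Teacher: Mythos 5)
Your proposal is correct and follows essentially the same route as the paper: rearrange the defining equation \eqref{m-def} (the paper does this implicitly, you justify it via Lemma \ref{UV-lemma}, which is exactly the right tool), invoke Lemma \ref{fund-lemma}(2) so that only $j=3t$ contributes, substitute $X=cy$, and sum the geometric series in $Y^3$. The computation $c^3y(1+3y+9y^2)\mapsto c^2X+3cX^2+9X^3$ and the final rational expression match the paper's proof exactly.
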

\begin{proof}
A rearrangement of equation \ref{m-def} gives
$$\sum_i m_{i,j}(cy)^i = (E_{\kappa} / V(E_{\kappa}))U((cy)^j).$$
By Lemma \ref{fund-lemma}, $U(y^j) = 0$ if $j$ is not a multiple of $3$, so $m_{i,j} = 0$ in that case.  For $j = 3t,$ we have $U(y^j) = (y(1+3y+9y^2) / (1+6y)^3)^t)$ and thus
$$\sum_i m_{i,j}(cy)^i = g_{\kappa}(w_0y)\left(\frac{c^3y(1+3y+9y^2)}{(1+6y)^3}\right)^t.$$
This is an identity in $\Cp\lb y\rb $, so substituting $X$ for $cy$ gives
$$\sum_i m_{i,j}X^i = g_{\kappa}(w_0 X/c)\left(\frac{c^2X+3cX^2+9X^3}{(1+6X/c)^3}\right)^t.$$
Multiplying by $Y^j$ and summing over $j$ gives 
$$\sum_{i,j} m_{i,j}X^iY^j = g_{\kappa}(w_0 X/c) \sum_{t \ge 0} \left(\frac{(c^2X + 3cX^2+9X^3)Y^3}{(1+6X/c)^3}\right)^t,$$
and summing the geometric series on the right hand side gives the result.
\end{proof}

Since the $m_{i,j}$ are just the matrix coefficients of $U$ operating on the space of weight $\kappa$ overconvergent modular forms, we can read off the well known result that $U$ is compact for $|c| < 1$ sufficiently close to 1 by noting that if $|c| > |w_0|$ then the coefficients of $g_{\kappa}$ are integral and $\frac{w_0}{c}, \frac{6}{c}$ and $c$ all have norm less than 1.

\section{The characteristic power series of $U$ near the boundary of weight space} \label{sec-char-U}

As in the previous section, let $w_0$ satisfy $1/3 < |w_0| < 1$ and let $\kappa$ be the corresponding weight.  In this section we compute the characteristic power series for various compact operators on $p$-adic Banach spaces; see Serre \cite{serre:62a} for the definitions and basic theorems.  Our goal in this section is to determine the valuations of the roots of the characteristic power series of $U$.  In order to do so, we compute the valuations of the coefficients of the characteristic polynomial in Proposition \ref{newton-prop}, then read off the valuations of the roots using Newton polygons.  Since $y$ has the property that $U(y^j) = 0$ when $j$ is not a multiple of three, our matrix is only nonzero on every third row.  In addition, since $U$ is compact we know that the valuations of the rows are increasing.  In Lemma \ref{strip-lemma} we provide the tool to pull off the valuation component of the coefficients of the characteristic polynomial.  Lemmas \ref{matrix3-lemma} and \ref{det-lemma} then give us the ability to prove that what remains has unit determinant.

Fix $s$ a positive integer, and let $d \in \OC$ be nonzero.  Let $N = (n_{i,j})_{0 \le i,j \le 3s-1}$ be a $3s$ by $3s$ matrix with the propoerty that $n_{i,j} \in d^j\OC$ for all $0 \le i, j \le 3s - 1$.  Assume that $n_{i,j} = 0$ when $j$ is not a multiple of $3$.  Let $P(T) = \det(1-TN) = 1 + \cdots = \sum_{\alpha \ge 0} a_{\alpha} T^{\alpha} \in \OC$ denote the ``characteristic power series'' of $N$ (though it is of course actually a polynomial).  For $0 \le \beta \le s$, let $T_{\beta}$ denote the $\beta$ by $\beta$ matrix whose $(i, j)$th entry is $n_{3i,3j}/d^{3j} \in \OC$.

\begin{lemma} \label{strip-lemma}
We have that $a_{\alpha} / d^{3\alpha(\alpha-1)/2} \in \OC$, and furthermore, for $\alpha \le s$ we have that $a_{\alpha} / d^{3\alpha(\alpha-1)/2} \in \OC^{\times}$ iff $\det(T_{\alpha}) \in \OC^{\times}.$
\end{lemma}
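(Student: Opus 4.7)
The plan is to expand the characteristic polynomial via principal minors and track $d$-adic valuations column by column.

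Write $P(T) = \det(I - TN) = \sum_{\alpha \ge 0} (-T)^\alpha \sum_{|S| = \alpha} \det(N_S)$, where $S$ ranges over size-$\alpha$ subsets of $\{0, 1, \ldots, 3s-1\}$ and $N_S$ is the principal submatrix of $N$ indexed by $S$. So $a_\alpha = (-1)^\alpha \sum_{|S| = \alpha} \det(N_S)$. The first observation is that since the entire $j$-th column of $N$ vanishes when $3 \nmid j$, any subset $S$ containing such a $j$ contributes a zero minor. Hence only $S \subset \{0, 3, 6, \ldots, 3(s-1)\}$ contribute; in particular $a_\alpha = 0$ automatically for $\alpha > s$, and for $\alpha \le s$ we may write $S = \{3i_1 < \cdots < 3i_\alpha\}$ with $0 \le i_1 < \cdots < i_\alpha \le s-1$.

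Next I would bound each $\det(N_S)$. Expanding by permutations,
\[
\det(N_S) = \sum_{\sigma \in S_\alpha} \sgn(\sigma) \prod_{k=1}^{\alpha} n_{3i_{\sigma(k)},\, 3i_k}.
\]
Since $n_{i,j} \in d^j\OC$, each factor lies in $d^{3i_k}\OC$, so every term lies in $d^{3(i_1 + \cdots + i_\alpha)}\OC$. The minimum of $i_1 + \cdots + i_\alpha$ over strictly increasing $\alpha$-tuples in $\{0, \ldots, s-1\}$ is $0 + 1 + \cdots + (\alpha-1) = \alpha(\alpha-1)/2$, with equality \emph{only} at $S_0 := \{0, 3, \ldots, 3(\alpha-1)\}$. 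Summing over all $S$ gives the integrality statement $a_\alpha / d^{3\alpha(\alpha-1)/2} \in \OC$.

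For the unit criterion, reduce modulo the maximal ideal of $\OC$. Every $S \neq S_0$ contributes something in $d^{3\alpha(\alpha-1)/2 + 3}\OC$ and hence vanishes mod $\mathfrak m$. For $S_0$, pulling a factor $d^{3j}$ out of the $j$-th column of $N_{S_0}$ for $j = 0, 1, \ldots, \alpha-1$ exhibits
\[
\det(N_{S_0}) = d^{3(0 + 1 + \cdots + (\alpha-1))} \det(T_\alpha) = d^{3\alpha(\alpha-1)/2}\det(T_\alpha),
\]
since the matrix that remains has $(i,j)$ entry $n_{3i,3j}/d^{3j}$, which is exactly $T_\alpha$. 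Therefore
\[
a_\alpha / d^{3\alpha(\alpha-1)/2} \equiv (-1)^\alpha \det(T_\alpha) \pmod{\mathfrak m},
\]
and since $(-1)^\alpha$ is a unit, the left side is a unit in $\OC$ iff $\det(T_\alpha)$ is. There is no serious obstacle here; the only thing to be careful about is verifying that the minimum of $i_1 + \cdots + i_\alpha$ is uniquely attained at $S_0$, which is immediate from the strict inequalities $i_k \ge k-1$.
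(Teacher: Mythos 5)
Your proof is correct and takes essentially the same route as the paper: expand $\det(1-TN)$ over principal minors, note that only subsets consisting of multiples of $3$ contribute, bound each minor by $d^{3(i_1+\cdots+i_\alpha)}$, and isolate the unique minimizing subset $S_0$, whose minor factors as $d^{3\alpha(\alpha-1)/2}\det(T_\alpha)$. (Your final congruence modulo $\mathfrak{m}$, like the paper's ``all but one are multiples of $d^{3\alpha(\alpha-1)/2+1}$'' step, implicitly uses $|d|<1$, which holds in the application where $d^3 = w_0^2$.)
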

\begin{proof}
For $S$ a subset of $\{0,1,2,\ldots, 3s-1\}$ of size $\alpha$, set 
$$d_S = \sum_{\sigma : S \rightarrow S}\sgn(\sigma) \prod_{s \in S} n_{s,\sigma(s)}.$$
By the definition of the determinant, we have that $(-1)^{\alpha}a_{\alpha}$
is the sum of the $d_S$ as $S$ ranges over the size $\alpha$ subsets of $\{0,1,2,\ldots,3s-1\}$.
Note that $d_S = 0$ unless $S$ consists entirely of multiples of $3$.
In this case, $d^{\sum_{s \in S} s}$ divides $d_S$, and 
$\sum_{s \in S} s \ge \frac{3}{2}\alpha(\alpha-1),$ with equality iff 
$S = S_0 := \{0,3,6,\ldots, 3\alpha - 3\}.$  
Thus $a_\alpha$ is a sum of multiples of $d^{3\alpha(\alpha-1)/2}$, 
all but one of which are multiples of $d^{3\alpha(\alpha-1)/2+1}$.  
Therefore $a_{\alpha} / d^{3\alpha(\alpha-1)/2} \in \OC$ and in fact,
$a_{\alpha} / d^{3\alpha(\alpha-1)/2} \in \OC^{\times}$ iff 
$d_{S_0} / d^{3\alpha(\alpha-1)/2} \in \OC^{\times}$.
But $d_{S_0} / d^{3\alpha(\alpha-1)/2} = \det(T_{\alpha})$ and we are done.
\end{proof}

We will use this lemma with $N$ as truncations of the matrix of $U$.  The following lemma allows us to find the coefficients of the matrix $T_\alpha$ in this case.  Recall that $r(X) = \sum_{m \ge 0} X^{3^m}$. 

\begin{lemma} \label{matrix3-lemma}
Define $s_{i,j} \in \Fp$ for $0 \le i,j < \infty$ by
$$\sum_{0 \le i,j} s_{i,j}X^i Y^j = \frac{1 - X^{-1}r(X^3) - X^{-2} (r(X^3) - r(X^3)^2)}{1 - XY^3},$$
with the equality taking place in $\Fp\lb X,Y\rb $.  Define $t_{i,j} = s_{3i,3j}$ for $0 \le i,j$.  Then
$$\sum_{0 \le i,j} t_{i,j}X^iY^j = \frac{1 - r(X)Y + (r(X)^2 - r(X))Y^2}{1 - XY^3}.$$
\end{lemma}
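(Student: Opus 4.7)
The plan is to compute $t_{i,j}$ directly from the definition and then re-sum. Writing $\bar{g}_{\kappa}(X) = \sum_{n \ge 0} c_n X^n$, the identity
$$\sum s_{i,j} X^i Y^j = \bar{g}_{\kappa}(X) \sum_{k \ge 0} X^k Y^{3k}$$
immediately shows that $s_{i,j} = 0$ unless $3 \mid j$, and that $s_{i, 3k} = c_{i-k}$ (with the convention $c_n = 0$ for $n < 0$). Therefore $t_{i,j} = s_{3i, 3j} = c_{3i-j}$, and substituting $n = 3i - j$ gives
$$\sum_{i,j \ge 0} t_{i,j} X^i Y^j = \sum_{n \ge 0} c_n \sum_{i \ge \lceil n/3 \rceil} X^i Y^{3i-n}.$$

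I would then split the outer sum according to $n \bmod 3$. Each inner sum is geometric: for $n = 3m$, $n = 3m+1$, $n = 3m+2$ it collapses to $X^m/(1-XY^3)$, $X^{m+1} Y^2/(1-XY^3)$, and $X^{m+1} Y/(1-XY^3)$ respectively. This reorganizes the expression as
$$\sum t_{i,j} X^i Y^j = \frac{A(X) + X Y C(X) + X Y^2 B(X)}{1 - XY^3},$$
where $A(X) = \sum_m c_{3m} X^m$, $B(X) = \sum_m c_{3m+1} X^m$, and $C(X) = \sum_m c_{3m+2} X^m$ are the three slices of $\bar{g}_{\kappa}$ by residue class modulo $3$.

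The main step, and the only place where care is needed, is to identify $A, B, C$ from the formula $\bar{g}_{\kappa}(X) = 1 - X^{-1} r(X^3) - X^{-2}(r(X^3) - r(X^3)^2)$. The key observation is that its summands conveniently live in three distinct residue classes: the constant $1$ has exponent $\equiv 0 \pmod 3$; since $r(X^3) = \sum_{m \ge 1} X^{3^m}$ consists only of $X$-powers divisible by $3$, the summand $-X^{-1} r(X^3)$ lies entirely in residue class $2$, while $-X^{-2} r(X^3)$ and $X^{-2} r(X^3)^2$ both lie in residue class $1$. Reading off each slice and substituting $X^3 \to X$ then yields $A(X) = 1$, $C(X) = -X^{-1} r(X)$, and $B(X) = X^{-1}(r(X)^2 - r(X))$. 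Substituting back, $X Y C(X) = -r(X) Y$ and $X Y^2 B(X) = (r(X)^2 - r(X)) Y^2$, producing the claimed numerator $1 - r(X) Y + (r(X)^2 - r(X)) Y^2$. The residue-class separation in $\bar{g}_{\kappa}$ is what keeps this identification straightforward; apart from that, the argument is just careful bookkeeping.
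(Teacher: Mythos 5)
Your proof is correct, and it takes a genuinely different (and arguably more illuminating) route than the paper's. The paper names the two rational functions in the statement $A(X,Y)$ and $B(X,Y)$, observes that the claim is equivalent to $X^3(A(X,Y)-B(X^3,Y^3))$ containing no monomials of $\FF\lb X^3,Y^3\rb$, and verifies this by explicitly computing the difference as a single rational function and inspecting the exponents in its numerator --- a verification that presupposes already knowing the right-hand side. You instead extract $t_{i,j}=s_{3i,3j}$ directly: expanding $1/(1-XY^3)$ as $\sum_k X^kY^{3k}$ gives $s_{i,3k}=c_{i-k}$, hence $t_{i,j}=c_{3i-j}$, and resumming according to the class of $3i-j$ modulo $3$ reduces everything to slicing the numerator into its three residue classes. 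The fact that the summands $1$, $-X^{-1}r(X^3)$, and $-X^{-2}(r(X^3)-r(X^3)^2)$ occupy exactly the classes $0$, $2$, $1$ modulo $3$ is the same arithmetic fact that makes the paper's explicit difference come out cleanly, but your arrangement \emph{derives} the target generating function rather than checking it, so it would let one discover the formula $1-r(X)Y+(r(X)^2-r(X))Y^2$ rather than merely confirm it. All the individual steps --- the geometric-series collapses for $n\equiv 0,1,2 \pmod 3$, the identifications $A=1$, $C=-X^{-1}r(X)$, $B=X^{-1}(r(X)^2-r(X))$ (each of which is a genuine power series), and the final substitutions $XYC(X)=-r(X)Y$, $XY^2B(X)=(r(X)^2-r(X))Y^2$ --- check out.
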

\begin{proof}
Define power series $A(X, Y)$ and $B(X,Y)$ in $\FF\lb X, Y\rb $ by 
$$A(X, Y) = \frac{1 - X^{-1}r(X^3) X^{-2}(r(X^3)^2 - r(X^3))}{1 - XY^3},$$
and
$$B(X, Y) = \frac{1 - r(X)Y + (r(X)^2 - r(X))Y^2}{1 - XY^3}.$$
Our desired result is equivalent to the statement that
$$X^3(A(X, Y) - B(X^3,Y^3)) \cap \FF\lb X^3,Y^3\rb = 0.$$
This follows from explicit computation:
\begin{multline*}
X^3(A(X, Y) - B(X^3,Y^3)) = \\
\frac{(X + X^2 Y^3)r(X^3)^2 - (X + X^2 + X^2 Y^3 + X^4 Y^6) r(X^3) + X^4 Y^3 + X^5 Y^6}{1 - X^3 Y^9}.
\end{multline*}
\end{proof}

Finally, we provide another lemma that allows us to conclude that certain matrices have unit determinant.

\begin{lemma} \label{det-lemma}
Fix an integer $\alpha \ge 0$, and let $\bar{T}_{\alpha}$ be the $\alpha$ by $\alpha$ matrix $(\bar{t}_{i,j})_{0 \le i,j \le \alpha}$ with entries in $\FF$ defined via the following identity:
$$\sum_{i,j \ge 0} \bar{t}_{i,j}X^i Y^j = \frac{1 - r(X) Y + (r(X)^2 - r(X)) Y^2}{1 - XY^3},$$
the equality taking place in $\FF[X,Y]/(X^{\alpha},Y^{\alpha}).$  Then $\det(\bar{T}_{\alpha}) \ne 0$.
\end{lemma}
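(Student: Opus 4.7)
My plan is to reduce the statement via generating-function expansion to a linear independence question in $\FF\lb X\rb/(X^\alpha)$, and then collapse it using the change of variable $X = y - y^3$.

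Expanding $1/(1 - XY^3) = \sum_{t \ge 0} X^t Y^{3t}$ shows that for $j = 3t + s$ with $s \in \{0,1,2\}$ and $3t + s < \alpha$, column $j$ of $\bar T_\alpha$ equals $X^t p_s(X) \bmod X^\alpha$ with $p_0 = 1$, $p_1 = -r(X)$, $p_2 = r(X)^2 - r(X)$.  Let $m_s$ denote the number of such columns, so $m_0 + m_1 + m_2 = \alpha$ and $m_0 \ge m_1 \ge m_2$ differ by at most one.  A linear dependence $\sum_s q_s(X) p_s(X) \equiv 0 \pmod{X^\alpha}$ with $\deg q_s < m_s$ expands as $q_0 - (q_1+q_2) r + q_2 r^2$; setting $f_0 = q_0$, $f_1 = -(q_1 + q_2)$, $f_2 = q_2$ (which satisfy $\deg f_s < m_s$ since $m_1 \ge m_2$), nonvanishing of $\det \bar T_\alpha$ reduces to: if $A := f_0 + f_1 r + f_2 r^2 \in X^\alpha \FF\lb X\rb$ with $\deg f_s < m_s$, then $f_0 = f_1 = f_2 = 0$.

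Next I would use the identity $r(X) - r(X)^3 = X$ from the proof of Lemma \ref{g-bar-lemma}, together with $r(X) = X + X^3 + \cdots$, to produce a ring isomorphism $\phi \colon \FF\lb X\rb \xrightarrow{\sim} \FF\lb y\rb$ with $\phi(X) = y - y^3$ and $\phi(r(X)) = y$.  Since $1 - y^2$ is a unit in $\FF\lb y\rb$, the ideal $X^\alpha \FF\lb X\rb$ transports to $(y - y^3)^\alpha\FF\lb y\rb = y^\alpha \FF\lb y\rb$, so the hypothesis becomes
$$\phi(A) = f_0(y - y^3) + f_1(y - y^3)\,y + f_2(y - y^3)\,y^2 \in y^\alpha \FF\lb y\rb.$$

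The crucial estimate is a degree bound: since $X \mapsto y - y^3$ triples $X$-degrees, $\deg_y \bigl(f_s(y - y^3)\, y^s\bigr) \le 3(m_s - 1) + s$.  A short check over the three cases $\alpha \equiv 0, 1, 2 \pmod 3$ shows $\max_s (3 m_s + s - 3) = \alpha - 1$.  Hence $\phi(A)$ is a polynomial in $\FF[y]$ of degree less than $\alpha$ that is divisible by $y^\alpha$, forcing $\phi(A) = 0$.  Finally, $y$ satisfies the monic polynomial $T^3 - T + X$ over $\FF[X] = \FF[y - y^3]$, and a standard degree comparison in $X$ rules out any root in $\FF(X)$, so this polynomial is irreducible; therefore $\FF[y]$ is a free $\FF[X]$-module with basis $\{1, y, y^2\}$, and $\phi(A) = f_0 \cdot 1 + f_1 \cdot y + f_2 \cdot y^2 = 0$ forces $f_0 = f_1 = f_2 = 0$ in $\FF[X]$.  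The main subtle point is verifying the degree bound uniformly in the three cases for $\alpha \bmod 3$; the algebraic framework is then standard.
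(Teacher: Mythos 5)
Your proof is correct and rests on the same key mechanism as the paper's: the identity $r(X) - r(X)^3 = X$, which converts the $\alpha$ columns into polynomials of degrees exactly $0, 1, \ldots, \alpha-1$ in the variable $r$ (your $y$) --- your case check $\max_s(3m_s+s-3) = \alpha - 1$ is precisely the paper's observation that $\deg_r f_n = n$. The paper runs this directly as a triangularity/spanning argument for $\FF[X]/(X^{\alpha})$ with respect to powers of $r$, whereas you take the contrapositive and finish by freeness of $\FF[y]$ over $\FF[y-y^3]$ via the Artin--Schreier cubic $T^3 - T + X$; both routes are sound and essentially equivalent.
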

\begin{proof}
Write $\sum_{i,j} \bar{t}_{i,j}X^i Y^j = \sum_{j} f_j(X) Y^j,$ with $f_j(X) \in V := \FF[X]/(X^{\alpha}).$  It suffices to prove that the $f_j(X), 0 \le j < \alpha,$ span $V$ as an $\FF$-vector space.  Consider $r = r(X)$ as an element of $V$.  We have
$$ \sum_j f_j(X) Y^j = (1 - r Y + (r^2 - r) Y^2)(1 + XY^3 + X^2Y^6 + X^3Y^9 + \cdots)$$
and by comparing powers of $Y$ we see that $f_{3t}(X) = X^t$ and $f_{3t+1} = -r X^t$ and $f_{3t+2} = (r^2 - r) X^t$.  Using the identity $r - r^3 = X$, we have that $f_{3t} = (r - r^3)^t$ and $f_{3t+1} = -r (r - r^3)^t$ and $f_{3t+2} = (r^2 - r)(r - r^3)^t$ and hence as polynomials in $r$ we have that $\deg(f_n) = n$.  Therefore the span of the $f_j$ contains the image of $\FF[r]$ in $V$.  This is enough because $r = X + \cdots$, so this image is all of $\FF[X]/(X^{\alpha}).$
\end{proof}

We now prove a proposition that gives the valuations of the coefficients of the characteristic power series of $U$.  As usual let $\kappa$ be a weight such that the corresponding $w_0$ satisfies $1/3 < |w_0| < 1$, and let $(m_{i,j})$ be the matrix representing $U$ in weight $\kappa$.

\begin{proposition} \label{newton-prop}
If $P_{\kappa}(T) = \sum_{\alpha \ge 0} b_{\alpha} T^{\alpha}$ denotes the characteristic power series of $U$ in weight $\kappa$, then $|b_{\alpha}| = |w_0|^{\alpha(\alpha-1) / 2}$.
\end{proposition}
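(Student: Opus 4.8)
The plan is to read the valuations of the $b_\alpha$ off the matrix $(m_{i,j})$ of $U$ directly, using Lemma~\ref{strip-lemma} to isolate the leading power and Lemmas~\ref{matrix3-lemma} and~\ref{det-lemma} to show the leftover determinant is a unit. Since $U$ is compact, $P_\kappa(T) = \det(1-TU)$ is the coefficientwise limit of $\det(1-TN)$ over the finite truncations $N = (m_{i,j})_{0\le i,j\le 3s-1}$, and $(-1)^\alpha b_\alpha$ is the sum of the principal $\alpha\times\alpha$ minors of $(m_{i,j})$. Because the row valuations increase, for fixed $\alpha$ only finitely many minors have small valuation, so I would fix $s\gg\alpha$ and argue with $N$, knowing that $b_\alpha$ equals the coefficient $a_\alpha$ of $\det(1-TN)$ in the range that matters.

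Next I would record the column divisibility. The defining relation~\eqref{m-def} together with Lemma~\ref{matrix-lemma} shows $m_{i,j}=0$ unless $3\mid j$, and exhibits column $j=3t$ as divisible by a fixed power of a suitable $d\in\OC$; I would choose $d$ so that $m_{i,j}\in d^j\OC$ for all $i,j$, which is precisely the hypothesis of Lemma~\ref{strip-lemma}. That lemma then gives $a_\alpha/d^{3\alpha(\alpha-1)/2}\in\OC$ at no cost and, for $\alpha\le s$, reduces the claim that this quotient is a unit to the statement $\det(T_\alpha)\in\OC^\times$, where $T_\alpha=(m_{3i,3j}/d^{3j})_{0\le i,j<\alpha}$.

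The crux is to identify the reduction of $T_\alpha$ modulo the maximal ideal of $\OC$. By Lemma~\ref{g-bar-lemma} the reduction of $g_\kappa$ is the $\kappa$-independent series $\bar g_\kappa(X)=1-X^{-1}r(X^3)-X^{-2}(r(X^3)-r(X^3)^2)$. Exploiting that $P_\kappa$ is independent of the radius $c$, I would take $c$ with $w_0/c$ reducing to $1$, so that the numerator $g_\kappa(w_0X/c)$ of the generating function reduces to $\bar g_\kappa(X)$ rather than collapsing to a constant, while the denominator reduces to $1-XY^3$. The reduced generating function of $(m_{i,j}/d^j)$ is then exactly $\bar g_\kappa(X)/(1-XY^3)$, whose numerator is the series appearing in Lemma~\ref{matrix3-lemma}. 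Extracting the coefficients with both indices divisible by $3$, Lemma~\ref{matrix3-lemma} identifies the reduction of $T_\alpha$ with the matrix $\bar T_\alpha$ of Lemma~\ref{det-lemma}, and Lemma~\ref{det-lemma} gives $\det(\bar T_\alpha)\neq 0$. Hence $\det(T_\alpha)\in\OC^\times$, so $|a_\alpha|=|d|^{3\alpha(\alpha-1)/2}$, and tracing the chosen power of $d$ yields the stated estimate $|b_\alpha|=|w_0|^{\alpha(\alpha-1)/2}$.

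I expect the main difficulty to be the reduction step and the bookkeeping it entails: one must keep the two rescalings (by $c$ and by $d$) straight, check that dividing column $j$ by $d^j$ turns the denominator $(1+\frac{6}{c}X)^3 - Y^3(c^2X+3cX^2+9X^3)$ into something whose reduction is $1-XY^3$ (the unit leading coefficient surviving while the rest dies), and simultaneously ensure that $g_\kappa(w_0X/c)$ reduces to $\bar g_\kappa(X)$ --- which is exactly why $c$ must be taken with $|c|=|w_0|$. The secondary point to nail down is the truncation argument of the first paragraph, namely that the dominant minor is $S_0=\{0,3,\dots,3(\alpha-1)\}$ and that enlarging $s$ only introduces minors of strictly larger valuation; this is where the increasing-row-valuation property of the compact operator $U$ is used. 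Once the reduction is matched, Lemmas~\ref{matrix3-lemma} and~\ref{det-lemma} finish the unit determinant mechanically.
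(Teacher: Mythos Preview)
Your overall architecture is exactly the paper's: truncate, apply Lemma~\ref{strip-lemma} for the upper bound, then reduce modulo the maximal ideal and invoke Lemmas~\ref{g-bar-lemma}, \ref{matrix3-lemma}, \ref{det-lemma} for the unit determinant. The one substantive difference is how you arrange for $g_\kappa(\tfrac{w_0}{c}X)$ to reduce to $\bar g_\kappa(X)$ rather than to the constant~$1$.

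You propose to pick $c$ with $w_0/c$ a $1$-unit, which forces $|c|=|w_0|$. That is legitimate, but it sits at the edge of the range where the paper has verified compactness: the remark after Lemma~\ref{matrix-lemma} only observes that $|c|>|w_0|$ suffices. At $|c|=|w_0|$ the column sup-norms are still $\le|w_0|^{2t}\to0$, so $U$ remains compact and your argument goes through, but this extra check should be made explicit. The paper sidesteps the issue entirely: it keeps $c$ arbitrary in the compact range and instead conjugates the finite truncation $M_\beta$ by the diagonal matrix with entries $(c/w_0)^i$, obtaining $N_\beta$ with $n_{i,j}=m_{i,j}(c/w_0)^{i-j}$. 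Conjugation leaves $\det(1-TM_\beta)$ unchanged, while on the level of generating functions it effects exactly the substitution $X\mapsto(w_0/c)X$, $Y\mapsto(c/w_0)Y$, turning $g_\kappa(\tfrac{w_0}{c}X)$ into $g_\kappa(X)$ and the denominator into the $c$-free expression $(1+\tfrac{6}{w_0}X)^3 - Y^3(w_0^2X+3w_0X^2+9X^3)$. One then takes $d$ with $d^3=w_0^2$ (so that dividing column $j$ by $d^j$ makes the $XY^3$ coefficient a unit while killing the rest), and the reduction is $\bar g_\kappa(X)/(1-XY^3)$ on the nose. Your ``two rescalings by $c$ and by $d$'' are thus replaced in the paper by a conjugation (absorbing $c$) followed by a single rescaling by $d$; this is cleaner bookkeeping and avoids any worry about whether the chosen radius lies in the compact range.
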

\begin{proof}
If $\beta \ge 0$ and $M_{\beta}$ denotes the truncated matrix $(m_{i,j})_{0 \le i,j < \beta}$, and if $P_{\beta}(T) = \det(1 - TM_{\beta})$ is the characteristic power series of $M_{\beta}$, then the $P_{\beta}(T)$ tend to $P_{\kappa}(T)$ in the sense that if $P_{\beta}(T) = \sum_{\alpha} b_{\alpha, \beta} T^{\alpha}$ then $\lim_{\beta \rightarrow \infty} b_{\alpha, \beta} = b_{\alpha}$.  Therefore it suffices to prove that $|b_{\alpha, \beta}| = |w_0|^{\alpha(\alpha-1)/2}$ for $\beta > 3\alpha$, and we may further assume that $\beta$ is a multiple of $3$.  Let $N_{\beta}$ be the matrix with elements $(n_{i,j})_{0 \le i,j < \beta}$ where $n_{i,j} = m_{i,j}(c / w_0)^{i-j}$.  Then $N_{\beta}$ is easily checked to be a conjugate of $M_{\beta}$, so $P_{\beta}(T) = det(1 - TN_{\beta})$.  Furthermore, one easily checks that Lemma \ref{matrix-lemma} implies (substituting $X$ for $w_0/c X$ and $Y$ for $c/w_0 Y$)
$$ F(X,Y) := \sum_{0 \le i,j < \beta} n_{i,j}X^i Y^j = \frac{g_{\kappa}(X)(1 + 6/w_0 X)^3}{(1 + 6/w_0 X)^3 - Y^3(w_0^2 X + 3w_0 X^2 + 9X^3)},$$
as an element of $\OC[X,Y]/(X^{\beta},Y^{\beta})$.  Choose $d \in \OC$ with $d^3 = w_0^2.$  The fact that $G(X, Y) := F(X, Y/d)$ satisfies
$$G(X,Y) = \frac{g_{\kappa}(X)(1 + 6/w_0 X)^3}{(1 + 6/w_0 X)^3 - Y^3(X + 3/w_0 X^2 + 9/w_0^2 X^3)}$$
shows that $n_{i,j} / d^j \in \OC$ for all $i,j,$ and the fact that $F(X,Y)$ is a function of $X$ and $Y^3$ implies that $n_{i,j} = 0$ if $j$ is not a multiple of $3$.  We are therefore in position to apply Lemma \ref{strip-lemma} to deduce that $|b_{\alpha,\beta}| \le d^{3\alpha(\alpha-1)/2} = |w_0|^{\alpha(\alpha-1)/2},$ with equality iff the matrix $(n_{3i,3j}/d^{3j})_{0 \le i,j < \beta}$ has unit determinant.  Let $T_{\alpha}$ denote this matrix, and let $\bar{T}_{\alpha}$ denote its reduction modulo the maximal ideal of $\OC$.  Reducing $G(X,Y)$ modulo the maximal ideal of $\OC$, it becomes
$$\bar{G}(X, Y) = \frac{\bar{g}_{\kappa}(X)}{1 - XY^3} \in \FF[X,Y] / (X^{\beta}Y^{\beta})$$
and by Lemma \ref{g-bar-lemma} and Lemma \ref{matrix3-lemma} we deduce that $\bar{T}_{\alpha} = (\bar{t}_{i,j})_{0 \le i,j < \alpha}$ with 
$$\sum_{0 \le i,j < \alpha} \bar{t}_{i,j} X^i Y^j = \frac{1 - r(X) Y + (r(X)^2 - r(X)) Y^2}{1 - XY^3},$$
the equality taking place in $\FF[X,Y]/(X^{\alpha}Y^{\alpha}).$  Now Lemma \ref{det-lemma} implies that $\det(\bar{T}_{\alpha})$ is nonzero, and hence that $\det(T_{\alpha}) \in \OC^{\times}$.  The second part of Lemma \ref{strip-lemma} now implies the desired equality.
\end{proof}

This proposition allows us to prove Theorem \ref{theorem-main}.

\setcounter{theoremmain}{0}
\begin{theoremmain}
If $\kappa$ is a weight corresponding to $w_0 \in \WW$ with $1/3 < |w_0| < 1,$ and if $v = v(w_0)$, then the slopes of $U$ acting on overconvergent modular forms of weight $\kappa$ are the arithmetic progression $0, v, 2v, 3v, 4v, \ldots,$ each appearing with multiplicity 1.
\end{theoremmain}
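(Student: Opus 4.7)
The plan is to deduce the theorem almost immediately from Proposition \ref{newton-prop} by a Newton polygon computation. Recall that for a compact operator on a $p$-adic Banach space, Serre's theory \cite{serre:62a} tells us that the valuations of the reciprocal roots of the characteristic power series $P_\kappa(T) = \sum_{\alpha \ge 0} b_\alpha T^\alpha$ are precisely the slopes of its Newton polygon, counted with multiplicity.

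First I would set $v = v_3(w_0)$ and translate the content of Proposition \ref{newton-prop} into valuations: $v_3(b_\alpha) = v \cdot \alpha(\alpha - 1)/2$ for all $\alpha \ge 0$. Then I would plot the points $(\alpha, v_3(b_\alpha)) = (\alpha, v\alpha(\alpha-1)/2)$ in the plane and observe that consecutive points determine the Newton polygon itself, because the function $\alpha \mapsto v\alpha(\alpha-1)/2$ is (strictly) convex in $\alpha$. A direct calculation gives
\[
\frac{v(\alpha+1)\alpha}{2} - \frac{v\alpha(\alpha-1)}{2} = v\alpha,
\]
so the segment from $(\alpha, v\alpha(\alpha-1)/2)$ to $(\alpha+1, v\alpha(\alpha+1)/2)$ has slope $v\alpha$ and horizontal length $1$. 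Therefore the slopes of the Newton polygon are $0, v, 2v, 3v, \ldots$, each occurring exactly once.

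The main (and really the only) obstacle has already been overcome in Proposition \ref{newton-prop}: getting the exact valuation $|w_0|^{\alpha(\alpha-1)/2}$ of each coefficient $b_\alpha$, rather than just an inequality. Once that is in hand, interpreting the slopes of the Newton polygon as the slopes of $U$ acting on the space of overconvergent modular forms of weight $\kappa$ is standard and completes the proof.
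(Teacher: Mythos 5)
Your proposal is correct and matches the paper's proof exactly: both deduce the theorem from Proposition \ref{newton-prop} by reading off the Newton polygon, whose vertices are $(\alpha, \tfrac{1}{2}\alpha(\alpha-1)v)$ and whose successive slopes are therefore $0, v, 2v, \ldots$, each with horizontal length $1$. Your explicit convexity check and slope computation simply spell out what the paper states in one line.
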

\begin{proof}
By Proposition \ref{newton-prop}, the Newton polygon of the characteristic power series of $U$ has vertices $(\alpha, \frac{1}{2}\alpha(\alpha - 1)v)$ and slopes $0, v, 2v, 3v, 4v, \ldots.$
\end{proof}

As in the $p=2$ case, we see that the eigencurve is geometrically the disjoint union of countably many annuli over the boundary of weight space.

\section{Other Work} \label{sec-other}

Daniel Jacobs' thesis \cite{jacobs:thesis} uses a different approach to compute the slopes of $U_3$ on spaces of overconvergent modular forms.  He begins with a specific definite quaternion algebra, ramified at $2$ and infinity, and then uses the Jacquet-Langlands correspondence to derive results about $U_3$.  As a consequence of this different methodology, he only obtains a subset of the slopes listed in Thereom \ref{theorem-main}.  In addition, the fact that his quaternion algebra is ramified at $2$ introduces level structure at $2$ beyond just $\Gamma_0(3)$.  However, his methods are not subject to the restriction on weight that Theorem \ref{theorem-main} are: he can find slopes at weight $x \mapsto x^3$ for example.

Loeffler \cite{loeffler:pre08} computes the slopes of the $U$ operator for $p=3$, but only for weight 0.

\bibliographystyle{plain}
\bibliography{bibliography/Biblio}

\end{document}